\documentclass[11pt, leqno]{amsart}

\usepackage{amssymb,amsfonts}
\usepackage{enumitem}
\usepackage{hyperref}
\usepackage{a4wide}
\usepackage[dvipsnames]{xcolor}

\newtheorem{thm}{Theorem}[section]
\newtheorem{cor}[thm]{Corollary}

\newtheorem{lem}[thm]{Lemma}

\theoremstyle{definition}

\theoremstyle{remark}
\newtheorem{rem}[thm]{Remark}

\makeatletter
\let\c@equation\c@thm
\makeatother
\numberwithin{equation}{section}

\title[Planarity and codimension reduction at the Andrews--Baker threshold]{Planarity estimates and codimension reduction for ancient flows at the Andrews--Baker threshold}

\thanks{The author is partially supported by INdAM-GNSAGA}

\subjclass[2020]{53E10, 53C42, 35B45}

\keywords{Ancient solutions, high codimension, mean curvature flow, planarity estimate, quadratic pinching}

\author[Francesco Ruatta]{Francesco Ruatta}
\address[Francesco Ruatta]{Dipartimento di Scienze Matematiche "Giuseppe Luigi Lagrange", Politecnico di Torino, Corso Duca degli Abruzzi, 24, Torino, Italy, I-10129}
\email{francesco.ruatta@polito.it}

\hypersetup{
    colorlinks=true,
    linkcolor=RawSienna,
    citecolor=RawSienna,
    filecolor=magenta,
    urlcolor=RawSienna,
    pdftitle={Planarity estimates and codimension reduction for ancient flows at the Andrews--Baker threshold},
}

\allowdisplaybreaks

\begin{document}


\begin{abstract}
    We derive an interior planarity estimate for space-time proper mean curvature flows of $n$-dimensional submanifolds of $\mathbb R^{n+p}$, $n\geq5$, that are uniformly quadratically pinched up to the Andrews--Baker threshold $\frac{4}{3n}$. The estimate is parabolically scale-invariant and it requires neither curvature nor entropy bounds. The proof combines Naff’s gradient estimates with a localization argument in the spirit of Ecker and Huisken. In dimensions five and six, Naff’s estimates alone do not give the gradient terms a sign. We bound these terms from below with an argument we developed in a previous work. As a consequence, for $n\geq5$, and by known results for $2\leq n\leq4$, every space-time proper ancient solution that is uniformly $\frac{4}{3n}$-pinched is either a static affine $n$-plane or lies in a fixed affine $(n+1)$-dimensional subspace. This replaces the finite entropy and polynomial curvature growth assumptions of Lee, Naff and Zhu by space-time properness, and improves their pinching constant in dimensions two, five and six. For closed flows, we extend Lynch’s planarity estimate to a larger range of exponents.
\end{abstract}

\maketitle


\section{Introduction}

Given $n\geq 2$ and $p\geq 1$, a one-parameter family of immersed and connected submanifolds without boundary $x(\cdot,t):M^n\to \mathbb R^{n+p}$ moves by mean curvature flow if $\partial_tx=\mathbf H(\cdot,t)$, where the mean curvature vector $\mathbf H(\cdot,t)$ is defined as the trace of the second fundamental form $B$ with respect to the induced metric $g$. A solution is called ancient if it is defined on the time interval $(-\infty,T)$, where $T\leq \infty$. 

A submanifold is \emph{$c$-(quadratically) pinched} (resp. \emph{uniformly $c$-(quadratically) pinched}) if $|B|^2\leq c|\mathbf H|^2$ (resp. if $|B|^2\leq (c-\varepsilon_0)|\mathbf H|^2$ for some $\varepsilon_0>0$). This property is important mainly for two reasons. First, the pinching condition $|B|^2+a<c|\mathbf H|^2$, with $a>0$, is preserved along closed flows provided that $c\leq\frac{4}{3n}$, as shown in \cite{AndrewsBaker_MCFPinchedSubmanifolds,Lynch_PlanarityEstimate}. Second, suitable choices of the pinching constant $c$ reflect convexity properties in codimension $p=1$. More precisely, every uniformly $\frac{1}{n-k}$-pinched hypersurface (with $1\leq k\leq n-1$) whose unit normal is chosen so that the scalar mean curvature $H$ is positive is uniformly $k$-convex; see \cite[Lemma 5.1]{HuiskenSinestrari_MCFConvexAncientSolutions}. 

Under quadratic pinching one expects \emph{codimension reduction}. For $n\geq 5$ and $p\geq 2$, this was established by Naff \cite{Naff_Planarity} through the so-called \emph{planarity estimate}. More precisely, given
\begin{equation*}
    c_n^{\mathrm{Naff}}\doteq\min\left\{\frac{3(n+1)}{2n(n+2)}, \frac{4}{3n}\right\}
    =\begin{cases}
    \dfrac{3(n+1)}{2n(n+2)}, & n\leq 7,\\[6pt]
    \dfrac{4}{3n}, & n\geq 8,
    \end{cases}
\end{equation*}
he proved that if a closed initial submanifold $M_0$ satisfies $|B|^2<c_n^{\mathrm{Naff}}|\mathbf{H}|^2$, then there exist constants $\sigma>0$ and $C<\infty$, both depending only on $M_0$, such that 
\begin{equation}\label{eq:planarity_estimate_original}
    |B_I|^2\leq C|\mathbf H|^{2-2\sigma}
\end{equation}
holds at every point of $M_t$, where $B_I$ is the component of the second fundamental form orthogonal to $\mathbf H$. As a consequence, blow-up limits of compact pinched flows satisfying $|B|^2 < c_n^{\mathrm{Naff}} |\mathbf{H}|^2$ are codimension one; see \cite[Corollary 2.6]{Naff_Planarity}. We stress that $c_n^{\mathrm{Naff}}<\frac{4}{3n}$ exactly when $n\leq 6$, and in particular for $n\in\{5,6\}$. In dimensions $2\leq n\leq 4$ one has $\frac{4}{3n}\leq\frac{1}{n-1}$. Hence, for closed flows, the stronger umbilic estimate $|\mathring B|^2\leq C|\mathbf H|^{2-2\sigma}$ applies; see \cite[Theorem $4$]{AndrewsBaker_MCFPinchedSubmanifolds}.

Very recently, Lynch \cite{Lynch_PlanarityEstimate} closed the gap regarding the constant $c_n^{\mathrm{Naff}}$ in dimensions $n\in\{5,6\}$, proving that the planarity estimate \eqref{eq:planarity_estimate_original} holds up to $\frac{4}{3n}$ for closed flows in every dimension $n\geq 5$, for every $\sigma\in\left(0,\frac{1}{30}\right]$. The new key ingredient is a sharp Kato-type inequality that relates the twisting of the principal normal $\nu\doteq \mathbf H/|\mathbf H|$ to the gradient of the second fundamental form.

In the noncompact case, Lee, Naff and Zhu \cite{LNZ_Planarity} proved a parabolic planarity estimate for complete flows with bounded curvature under uniform $c_n^{\mathrm{Naff}}$-pinching, and consequently obtained codimension reduction for ancient solutions with the same pinching, bounded entropy and polynomial curvature growth. 

Independently of Lynch's work, the gap between $c_n^{\mathrm{Naff}}$ and $\frac{4}{3n}$ in dimensions $5$ and $6$ was shown to be a consequence of technical aspects of the approach in \cite{Naff_Planarity,LNZ_Planarity} both for self-shrinkers \cite{IRR_shrinkers} and for translators \cite{IRR_translators}, with two purely elliptic approaches. 

The goal of this article is to present a scale-invariant planarity estimate for space-time proper and uniformly $\frac{4}{3n}$-pinched flows in dimensions $n\geq5$, and to deduce codimension reduction for ancient solutions under the same assumptions.

Throughout the paper, $x:M^n\times I\to\mathbb R^{n+p}$ denotes a mean curvature flow of a connected manifold $M$ without boundary. When $|\mathbf H|>0$ we write $B_I$ for the component of $B$ orthogonal to $\mathbf H$. Under a pinching assumption, $B=0$ wherever $\mathbf H=0$, and there we define $B_I\doteq0$. Given $x_0\in\mathbb R^{n+p}$ and $R>0$ we set
\begin{equation}\label{eq:def_K_lambda}
    K_\lambda\doteq \left\{(q,t)\in M\times[-\lambda^2R^2,0]\:\middle |\:|x(q,t)-x_0|\leq\lambda R\right\}.
\end{equation}
The flow is said to be \emph{space-time proper} if the map $M\times I\ni(q,t)\mapsto(x(q,t),t)\in\mathbb R^{n+p}\times I$ is proper, which makes every $K_\lambda$ compact and every time slice complete. In this setting, we formulate the main result.

\begin{thm}[Interior planarity estimate]\label{thm:main_result_planarity_estimate}
    Let $n\geq 5$, $p\geq 1$, $\varepsilon_0\in\left(0,\frac{1}{3n}\right)$ and $\sigma\in \left(0,\frac{1}{6}\right]$. There exists a constant $C_1=C_1(n,\varepsilon_0,\sigma)>0$ with the following property. Let $R>0$, $x_0\in\mathbb R^{n+p}$ and $x:M^n\times[-R^2,0]\to\mathbb R^{n+p}$ be a MCF of a connected manifold $M$ without boundary. Suppose $K_1$ is compact and $|B|^2\leq \left(\frac{4}{3n}-\varepsilon_0\right)|\mathbf H|^2$ on $K_1$. Then
    \begin{equation*}
        |B_I|^2\leq C_1R^{-2\sigma}|\mathbf H|^{2-2\sigma}\ \text{ on }K_{1/2}. 
    \end{equation*}
\end{thm}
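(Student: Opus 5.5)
By parabolic rescaling $x\mapsto R^{-1}(x-x_0)$, $t\mapsto R^{-2}t$, it suffices to treat $R=1$ and $x_0=0$. Both sides of the estimate scale the same way: $|B_I|^2$ and $|\mathbf H|^{2-2\sigma}R^{-2\sigma}$ are each of order length$^{-2}$.

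Following Naff and Lynch, I work with the pinching-weighted quantity
\[
f\doteq\frac{|B_I|^2}{|\mathbf H|^{2-2\sigma}}.
\]
To avoid the degenerate set $\{\mathbf H=0\}$, I would use the regularized version $f=|B_I|^2/(|\mathbf H|^2+\delta)^{1-\sigma}$ and let $\delta\to0$ at the end. Uniform pinching keeps $f$ bounded near $\{\mathbf H=0\}$.

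\textbf{Step 1: the evolution inequality.} The first step is to compute
\[
(\partial_t-\Delta)f\le -2\langle\nabla f,\nabla\log|\mathbf H|^{2}\rangle(1-\sigma)-\mathcal G+2\sigma|B|^2f,
\]
where $\mathcal G$ collects the gradient terms. The reaction terms are handled by the algebraic estimates of Naff, which are valid up to $\frac{4}{3n}-\varepsilon_0$ and give a favourable sign to the zero-order part except for the term $2\sigma|B|^2f$. The gradient terms $\mathcal G$ need a lower bound of the form $\mathcal G\ge \eta\,\frac{|\nabla B|^2}{|\mathbf H|^2}f$, or at least $\mathcal G\ge0$.
\begin{itemize}
\item For $n\ge7$, this follows from Naff's gradient estimates.
\item For $n\in\{5,6\}$, I invoke the lower bound developed in the author's previous work, of Kato type, comparing the twisting of $\nu$ with $|\nabla B|^2$. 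Because of the extra room $\varepsilon_0$, this yields a positive multiple of $|\nabla\mathbf H|^2$-type terms.
\end{itemize}
The restriction $\sigma\le\frac16$ is what I expect to make the $\sigma$-dependent error absorbable here.

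\textbf{Step 2: controlling the bad term.} The term $2\sigma|B|^2 f$ has no sign. Following Naff's closed argument, I would use the commutator/Simons identity to show that $\Delta|B_I|^2$ contains $+|B_I|^2|\mathbf H|^2$-type reaction terms with a good constant. Concretely, I would test the inequality against $f^{k}$ and use the Poincaré-type inequality $\int f^{k}|\mathbf H|^2\le C\int f^{k-1}|\nabla f|^2/|\mathbf H|^2+\dots$.

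In the localized setting, the Stampacchia $L^p$ iteration is replaced by a pointwise maximum-principle argument in the spirit of Ecker–Huisken. Set $\varphi=(1-|x|^2-2nt\cdot\text{const})_+$ with $(\partial_t-\Delta)\varphi\le0$ appropriately, and consider $g=\varphi^{m}f^{\theta}$, or $g=\varphi\, f$ with $f$ raised to a suitable power. Since $K_1$ is compact, $g$ attains an interior maximum on $K_1$. At that point $\nabla g=0$, which converts the cross terms $\langle\nabla\varphi,\nabla f\rangle$ into $|\nabla\varphi|^2/\varphi$ terms. These in turn are bounded by $C\varphi^{-1}$ times controlled quantities.

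\textbf{Step 3: closing the maximum principle (main obstacle).} The hard point is that, without curvature bounds, the term $2\sigma|B|^2f$ cannot be absorbed by the cutoff. One needs a good zero-order term that beats it. My first attempt is to replace $f$ by
\[
\tilde f=\frac{|B_I|^2}{|\mathbf H|^{2}}\,|\mathbf H|^{2\sigma},
\]
and to use the fact, from Naff's reaction estimate, that $|B_I|^2/|\mathbf H|^2$ satisfies
\[
(\partial_t-\Delta)\frac{|B_I|^2}{|\mathbf H|^2}\le -\eta\,\frac{|B_I|^2}{|\mathbf H|^2}\,|\mathbf H|^2+\text{gradient terms}.
\]
This strict decay, with rate $\eta(n,\varepsilon_0)|\mathbf H|^2$, dominates the growth $2\sigma|B|^2\le2\sigma c|\mathbf H|^2$ when $\sigma$ is small; this is where $\sigma\le\frac16$ and the gradient bound of Step 1 combine.

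At the maximum of $\varphi\tilde f$, one then gets
\[
\eta'|\mathbf H|^2\tilde f\le C\varphi^{-1}\tilde f.
\]
Using $\tilde f\le |\mathbf H|^{2\sigma}$ (by pinching), this gives $\varphi\tilde f\le C$ with $C=C(n,\varepsilon_0,\sigma)$. On $K_{1/2}$ we have $\varphi\ge c>0$, which gives the claim.
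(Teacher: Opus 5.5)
Your localization mechanism in Step 3 is sound: an Ecker--Huisken cutoff, a maximum point, and a zero-order decay rate comparable to $|\mathbf H|^2$ that beats the cutoff error. The reaction terms of your quantity $\tilde f=|B_I|^2/|\mathbf H|^{2-2\sigma}$ (your $f$ and $\tilde f$ coincide) are indeed favourable for $\sigma<\frac12$.

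The gap is in the gradient terms, which is exactly where the difficulty in dimensions $5$ and $6$ lies, and it starts with your choice of denominator. The evolution of $|B_I|^2$ contains the cross term $4\sum Q_{ijk}C_{ijk}$, which is linear in $Q$. Codazzi only forces $\nabla_kh^{n+1}_{ij}-C_{ijk}$ to be totally symmetric. So $Q$ can in general be made arbitrarily large, with $\langle Q,C\rangle\neq0$, while $\nabla^\perp B_I$, $\nabla^\perp\nu$ and $\nabla|\mathbf H|$ stay fixed. In your quotient the only good gradient term is $-2|\nabla^\perp B_I|^2$, and the $|\nabla^\perp\mathbf H|^2$ term coming from the denominator even has the wrong sign. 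Hence the gradient terms of $\tilde f$ admit neither a sign nor a lower bound of the form $-C|\nabla\log\tilde f|^2$.

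Naff's estimates, Lynch's Kato-type inequality and \cite[Lemma 7.11]{IRR_translators} all absorb $QC$ using $(1-\sigma)u\mathcal G_f$, where $\mathcal G_f=|\nabla^\perp B|^2-\frac4{3n}|\nabla^\perp\mathbf H|^2$. This term appears only when the pinching function $f=\frac4{3n}|\mathbf H|^2-|B|^2$, not $|\mathbf H|^2$, is in the denominator. So none of the estimates you invoke in Step 1 applies to your quantity. The attributions are also off. The Kato-type inequality for the twisting of $\nu$ is Lynch's, and even for $|B_I|^2/f^{1-\sigma}$ it makes the gradient bracket nonnegative only for $\sigma\le\frac1{30}$; with that substitution your Step 3 becomes Remark~\ref{rem:lynch_approach}. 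The author's earlier bound is not a nonnegativity statement, and $\varepsilon_0$ plays no role in it.

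The missing idea is that no sign is needed. For $\omega_{1/6}=|B_I|^{12}/f^5$, Lemma~\ref{lem:independence_from_underline_u} gives the negative lower bound $\mathcal G_\zeta\ge-\widetilde C|B_I|^2|\zeta|^2$, with $\zeta=\frac16\nabla\log\omega_{1/6}$. Together with $(1-\sigma)u\mathcal R_f-\mathcal R_I\ge\frac72|B_I|^4$, this yields $(\partial_t-\Delta)\log\omega\le-\beta\omega+\gamma|\nabla\log\omega|^2$. The $\gamma$-term is harmless: at the maximum of $(t+R^2)\eta\,\omega$, with $\eta=(R^2-r)^2$, one has $\nabla\log\omega=-\nabla\log\eta$, which the cutoff controls.

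The logarithmic form also repairs a second gap in your Step 2. At the maximum of $\varphi\tilde f$, the cross term $\langle\nabla\log|\mathbf H|^2,\nabla\tilde f\rangle$ becomes $-\varphi^{-1}\tilde f\langle\nabla\log|\mathbf H|^2,\nabla\varphi\rangle$. This contains the uncontrolled $|\nabla\log|\mathbf H||$ and is not absorbed by ``$\mathcal G\ge0$''. For $\log\omega$, the chain rule produces $-\sigma(1-\sigma)|\nabla\log D|^2$, where $D$ is the denominator. This absorbs the cross term by Young's inequality (Lemma~\ref{lem:identity_log_omega}).

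Finally, $\sigma\le\frac16$ is not a smallness condition. The gradient algebra requires $\sigma\ge\frac{n+2}{9n}$ (condition (C2)), so the argument is run at $\sigma=\frac16$. The range $(0,\frac16)$ is then reached by interpolating with $|B_I|^2\le\frac1{3n}|\mathbf H|^2$, a step your plan lacks. The Stampacchia/Poincar\'e route of Step 2 is neither needed nor available without integral bounds.
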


Letting $R\to \infty$ in Theorem \ref{thm:main_result_planarity_estimate} gives codimension reduction for ancient solutions when $n\geq 5$. For $2\leq n\leq 4$, this follows from the compactness criteria \cite[Proposition $4.1$]{LNZ_Planarity} and \cite[Proposition $8.1$]{IRR_translators}. These results reduce the problem to the study of compact ancient solutions, which are shrinking spheres by \cite[Theorem $1.1$]{LynchNguyen_PinchedAncientSolutionsHighCodimMCF}. Hence,

\begin{thm}[Codimension reduction]\label{thm:codim_reduction_ancient}
    Let $n\geq 2$, $p\geq 1$ and $x:M^n\times(-\infty,T)\to\mathbb R^{n+p}$ be a space-time proper and ancient mean curvature flow of a connected manifold $M$ without boundary. Assume that $|B|^2\leq\left(\frac{4}{3n}-\varepsilon_0\right)|\mathbf H|^2$ for some $\varepsilon_0>0$. Then, either $M_t\doteq x(M,t)$ is a static, flat copy of $\mathbb R^n$ in $\mathbb R^{n+p}$, or $|\mathbf H|>0$ on $M\times(-\infty,T)$ and the flow is contained in an affine $(n+1)$-dimensional subspace of $\mathbb R^{n+p}$. Furthermore, when $2\leq n\leq 4$, in the latter case the flow is a family of shrinking spheres.
\end{thm}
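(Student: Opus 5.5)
The plan is to deduce Theorem \ref{thm:codim_reduction_ancient} from Theorem \ref{thm:main_result_planarity_estimate} when $n\geq5$ and from the cited compactness results when $2\leq n\leq4$. First we split off the flat case. Uniform pinching $|B|^2\leq(\frac{4}{3n}-\varepsilon_0)|\mathbf H|^2$ forces $B=0$ wherever $\mathbf H=0$. Along the flow, $|\mathbf H|$ satisfies an evolution inequality of the form $(\partial_t-\Delta)|\mathbf H|^2\geq -2|\nabla \mathbf H|^2+2|\mathbf H|^4/n$, so its zero set is controlled by the strong maximum principle. Applying it locally on the compact sets $K_\lambda$, which space-time properness provides, shows: if $|\mathbf H|$ vanishes at some $(q_0,t_0)$, then $|\mathbf H|\equiv0$ for $t\leq t_0$. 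The pinching then gives $B\equiv0$ on those times, so each slice is a complete, connected, totally geodesic immersion, i.e.\ an affine $n$-plane, and it is static. Backward uniqueness, or uniqueness of space-time proper flows starting from a plane, propagates this to all $t<T$. Otherwise $|\mathbf H|>0$ everywhere.

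For $n\geq5$, fix $(q,t)$ with $t<T$ and translate time so that $t$ plays the role of $0$, taking $x_0=x(q,t)$. For every $R>0$ the parabolic cylinder $K_1$ is compact by space-time properness and pinched on all of it. Theorem \ref{thm:main_result_planarity_estimate} with, say, $\sigma=\frac16$ gives
\[
|B_I|^2(q,t)\leq C_1R^{-2\sigma}|\mathbf H|^{2-2\sigma}(q,t).
\]
Letting $R\to\infty$ yields $B_I\equiv0$. Hence $B=\langle B,\nu\rangle\nu$ with $\nu=\mathbf H/|\mathbf H|$ well defined. Next we show $\nabla^\perp\nu=0$. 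The Codazzi equation applied to $B=h\,\nu$ gives $\nabla_k h_{ij}\nu+h_{ij}\nabla^\perp_k\nu$ symmetric in $(i,k)$. Taking components orthogonal to $\nu$ yields $h_{ij}\nabla_k^\perp\nu=h_{kj}\nabla_i^\perp\nu$. Tracing over $i,j$ gives $|\mathbf H|\nabla_k^\perp\nu=h_{kj}\nabla_j^\perp\nu$; tracing differently and using $|h|^2<|\mathbf H|^2$, which the pinching gives for $n\geq2$, forces $\nabla^\perp\nu=0$. This is the standard argument (as in Naff, Lee--Naff--Zhu).

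So $\nu$ is normally parallel, and the normal bundle splits off a parallel line. On each slice, $\partial_i\nu=-h_{ij}e_j$ is tangent, so the space $T_xM_t\oplus\mathrm{span}\,\nu$ has zero derivative along $M_t$ and is constant. Thus each connected slice lies in an affine $(n+1)$-plane. That plane is also constant in time: $\partial_t x=\mathbf H$ lies in it, and one checks $\partial_t\nu$ is tangent using $\nabla^\perp\nu=0$. The flow is therefore contained in a fixed affine $(n+1)$-subspace.

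For $2\leq n\leq4$ we use $\frac{4}{3n}\leq\frac1{n-1}$. By \cite[Proposition 4.1]{LNZ_Planarity} and \cite[Proposition 8.1]{IRR_translators}, a space-time proper, uniformly pinched ancient flow is compact. \cite[Theorem 1.1]{LynchNguyen_PinchedAncientSolutionsHighCodimMCF} then makes it a family of shrinking spheres, which lie in an $(n+1)$-plane.

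The main obstacle is the dichotomy step: we need a localized strong maximum principle for $|\mathbf H|$ on a noncompact space-time proper flow, together with the extension of flatness to all times. We would first try the local strong maximum principle on the compact sets $K_\lambda$.
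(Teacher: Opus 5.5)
Your treatment of the case $|\mathbf H|>0$ is sound and close to the paper's. You apply Theorem \ref{thm:main_result_planarity_estimate} with $R\to\infty$ (after shrinking $\varepsilon_0$ below $\frac{1}{3n}$). You then use the Codazzi identity $(|\mathbf H|\delta_{kj}-h_{kj})\nabla^\perp_j\nu=0$, with invertibility coming from $|h|^2<|\mathbf H|^2$; this is \cite[Proposition 2.5]{Naff_Planarity}. Your time-independence argument is a valid alternative to the paper's route, which applies Lemma \ref{lem:forward_confinement} to $A_s$ and concludes by a dimension count. In your version, once $B_I\equiv0$ on all of space-time, $(\partial_t\mathbf H)^\perp$ is parallel to $\nu$, so $\partial_t\nu$ is tangent.

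The gap is in the dichotomy, and it is not a localization issue. Your inequality $(\partial_t-\Delta)|\mathbf H|^2\geq-2|\nabla^\perp\mathbf H|^2+\frac2n|\mathbf H|^4$ carries no information at a zero of $\mathbf H$. There $\partial_t|\mathbf H|^2=0$ and $\Delta|\mathbf H|^2=2|\nabla^\perp\mathbf H|^2$, so the inequality holds with equality. Isolated zeros, as for a vector-valued heat solution vanishing linearly at a point, are therefore not excluded. The bad term cannot be absorbed either, since $|\nabla^\perp\mathbf H|^2=|\nabla|\mathbf H||^2+|\mathbf H|^2|\nabla^\perp\nu|^2$ is not controlled by $|\mathbf H|^2$ and $|\nabla|\mathbf H|^2|$. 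The pinching must enter the evolution equation itself, not only afterwards. Apply the strong maximum principle to $f=\frac{4}{3n}|\mathbf H|^2-|B|^2$. By Lemma \ref{lem:inequality_evol_equation_f} it is a nonnegative supersolution: Kato's inequality gives $\mathcal G_f\geq0$ and Naff's estimate gives $\mathcal R_f\geq0$. Moreover $\varepsilon_0|\mathbf H|^2\leq f\leq\frac1{3n}|\mathbf H|^2$, so its zeros are exactly those of $\mathbf H$. This step is local and uses only the connectedness of $M$.

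The second missing piece is propagating flatness from $(-\infty,t_0]$ to $[t_0,T)$. Backward uniqueness runs in the wrong direction. Forward uniqueness for space-time proper flows without curvature bounds is not a result you can invoke. This is precisely where space-time properness enters in the paper. Lemma \ref{lem:forward_confinement} shows that containment in an affine subspace $y+\Pi$ propagates forward. It does so by applying the parabolic Omori--Yau principle of \cite{Ma_ParabolicOmoriYau} to $\rho=e^{-(t-t_0)}a/b$. Here $a=|\operatorname{proj}_{\Pi^\perp}(x-y)|^2$ is a subsolution and $b=1+|x-y|^2+2n(t-t_0)$ solves the heat equation with $a<b$. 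With $\dim\Pi=n$ this forces $B\equiv0$ for all $t<T$. Completeness together with \cite{ONeill_IsometricImmersion} then makes each slice an isometric copy of the plane.

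The low-dimensional case also depends on this corrected dichotomy. \cite[Proposition 8.1]{IRR_translators} only says that each slice is flat \emph{or} compact, not that the flow is compact. The conclusion $|\mathbf H|>0$ in the statement, which the paper secures before invoking \cite{LynchNguyen_PinchedAncientSolutionsHighCodimMCF}, comes from Lemma \ref{lem:lem_H_vanishing_or_not}.
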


Theorem \ref{thm:codim_reduction_ancient} holds up to the Andrews--Baker threshold $\frac{4}{3n}$ in every dimension, improving the constant $c_n^{\mathrm{Naff}}$ of \cite[Corollary $1.2$]{LNZ_Planarity} when $2\leq n\leq 6$. For $n\in\{3,4\}$, Lee, Naff and Zhu already reach $\frac{4}{3n}$ in the proof of \cite[Corollary 1.5(3)]{LNZ_Planarity}, through their Proposition $4.1$ and \cite[Theorem $1.1$]{LynchNguyen_PinchedAncientSolutionsHighCodimMCF}, so the gain over their result concerns $n\in\{2,5,6\}$. Moreover, Theorem \ref{thm:codim_reduction_ancient} replaces the assumptions of finite entropy and polynomial curvature growth, which are present in \cite[Corollary $1.2$]{LNZ_Planarity}, with the unique global hypothesis of space-time properness. 

Choosing $R^2=t$ in Theorem \ref{thm:main_result_planarity_estimate} gives a parabolically scale-invariant planarity estimate for flows that start at $t=0$, hence recovering an analogue of \cite[Theorem $1.1$]{LNZ_Planarity} at $\frac{4}{3n}$ with bounded curvature replaced by space-time properness. Their exponent depends on $n$ and the pinching, while in our case every $\sigma\in\left(0,\frac{1}{6}\right]$ is admissible for every $\varepsilon_0$, and only $C_1$ depends on $\varepsilon_0$. Moreover, for closed flows we extend \cite[Corollary $1.2$]{Lynch_PlanarityEstimate} from $\sigma\in\left(0,\frac{1}{30}\right]$ to $\sigma\in\left(0,\frac{1}{6}\right]$.

\begin{cor}\label{cor:lynch_planarity_results_sigma_extension}
    Let $n\geq 5$, $p\geq 1$, $\varepsilon_0\in\left(0,\frac{1}{3n}\right)$ and $\sigma\in\left(0,\frac{1}{6}\right]$. Let $C_1=C_1(n,\varepsilon_0,\sigma)$ be the constant of Theorem \ref{thm:main_result_planarity_estimate}. 
    \begin{enumerate}
        \item Let $x:M^n\times[0,T)\to\mathbb R^{n+p}$ be a space-time proper mean curvature flow of a connected manifold without boundary. Assume $|B|^2\leq(\frac4{3n}-\varepsilon_0)|\mathbf H|^2$ on $M\times[0,T)$. Then
        \begin{equation*}
            |B_I|^2\leq C_1\,t^{-\sigma}|\mathbf H|^{2-2\sigma}\ \text{ on }M\times(0,T).
        \end{equation*}
        \item If $M$ is closed, it is enough to assume $|\mathbf H|>0$ and $|B|^2\leq(\frac4{3n}-\varepsilon_0)|\mathbf H|^2$ on $M\times\{0\}$. Moreover, in this case,
        \begin{equation*}
            |B_I|^2\leq C_2|\mathbf H|^{2-2\sigma}\ \text{ on }M\times[0,T),
        \end{equation*}
        where $C_2=C_2(n,\varepsilon_0,\sigma,M_0)$.
    \end{enumerate}
\end{cor}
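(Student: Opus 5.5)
For part (1) we apply Theorem \ref{thm:main_result_planarity_estimate} directly with a time-shifted flow. Fix $(q_1,t_1)\in M\times(0,T)$ and set $x_0\doteq x(q_1,t_1)$ and $R^2\doteq t_1$. The translated flow $\tilde x(q,s)\doteq x(q,s+t_1)$, for $s\in[-R^2,0]$, is a mean curvature flow defined on $[-t_1,0]$. Its set $K_1$ is the preimage of a compact parabolic cylinder under the proper map $(q,t)\mapsto(x(q,t),t)$ restricted to $M\times[0,t_1]$, so $K_1$ is compact. The pinching holds on $K_1$ by hypothesis. Since $(q_1,0)\in K_{1/2}$ for $\tilde x$, the theorem gives
\begin{equation*}
|B_I|^2(q_1,t_1)\leq C_1 R^{-2\sigma}|\mathbf H|^{2-2\sigma}(q_1,t_1)=C_1t_1^{-\sigma}|\mathbf H|^{2-2\sigma}(q_1,t_1).
\end{equation*}
The only point requiring care is that space-time properness on $M\times[0,T)$ still yields properness on the closed slab $[0,t_1]$, which is immediate since $t_1<T$.

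For part (2), $M$ is closed, so the flow is trivially space-time proper on $[0,T)$. I would first propagate the pinching. By \cite{AndrewsBaker_MCFPinchedSubmanifolds,Lynch_PlanarityEstimate}, since $|\mathbf H|>0$ and $|B|^2\leq(\frac4{3n}-\varepsilon_0)|\mathbf H|^2$ at $t=0$ on a compact $M$, one can choose $a>0$ and $\varepsilon_0'\in(0,\varepsilon_0]$ depending on $M_0$ such that $|B|^2+a\leq(\frac4{3n}-\varepsilon_0')|\mathbf H|^2$ at $t=0$. This condition is then preserved for $t\in[0,T)$. In particular $|\mathbf H|^2\geq a/(\frac4{3n})$ is bounded below for all times.

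Now split time at some $t_*\in(0,T)$, say $t_*=\min\{T/2,1\}$, with $T<\infty$ since closed flows become singular. For $t\geq t_*$, part (1) with $\varepsilon_0$ gives $|B_I|^2\leq C_1t_*^{-\sigma}|\mathbf H|^{2-2\sigma}$; here the original pinching on $[0,T)$ is implied by the preserved pinching if we use $\varepsilon_0'$ instead, so the constant is $C_1(n,\varepsilon_0',\sigma)$, which depends on $M_0$. For $t\in[0,t_*]$, the flow is smooth on a compact set, so $|B|$ is bounded there by a constant depending on $M_0$ (via the existence time and curvature bounds of the smooth solution up to $t_*$). Combined with the lower bound on $|\mathbf H|$, this gives $|B_I|^2\leq|B|^2\leq C|\mathbf H|^{2-2\sigma}$ on $[0,t_*]$.

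Taking $C_2$ as the maximum of the two constants finishes the argument. The main obstacle is the bookkeeping in this step: part (2) only assumes pinching at $t=0$, so we must invoke preservation of pinching to have it on all of $[0,T)$ before part (1) applies. This is exactly why $C_2$ depends on $M_0$.
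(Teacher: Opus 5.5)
Your proof is correct and follows the paper's route. Part (1) is done by time-translating and applying Theorem \ref{thm:main_result_planarity_estimate} with $R^2=t_1$. Part (2) propagates the pinching, splits at an intermediate time, and uses (1) for late times and compactness of the slab for early times.

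The one difference is the early-time bound. The paper does not need your $a$-pinching or the lower bound on $|\mathbf H|$: it writes $|B_I|^2\le\frac{1}{3n}|\mathbf H|^{2\sigma}|\mathbf H|^{2-2\sigma}$ and uses only an upper bound for $|\mathbf H|$ on $M\times[0,t_1]$. It also cites preservation of the pinching with the same $\varepsilon_0$, so no adjusted $\varepsilon_0'$ is needed.
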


\subsection{Planarity estimate proof sketch}

The key ingredients are the evolution equations of $|B_I|^2$ and $f\doteq \frac{4}{3n}|\mathbf H|^2-|B|^2$, which combined give
\begin{equation*}
    (\partial_t-\Delta)|B_I|^2-(1-\sigma)u(\partial_t-\Delta)f=-2(\text{reaction terms})-2(\text{gradient terms}),
\end{equation*}
where $u\doteq |B_I|^2/f$. Under Naff’s gradient estimates, the gradient terms can be negative. Following \cite{IRR_translators}, we add to them the gradient terms produced by the logarithmic derivative of $\omega_\sigma\doteq fu^{\frac{1}{\sigma}}$, and we bound the sum below by $-\widetilde C\sigma^2|B_I|^2|\nabla\log\omega_\sigma|^2$. Now suppose that a function $\omega\geq 0$, smooth on $K_1\cap\{\omega>0\}$, satisfies a differential inequality of the form
\begin{equation}\label{eq:needed_diff_inequality_proof_sketch}
    (\partial_t-\Delta)(\log\omega)\leq -\beta\omega+\gamma|\nabla\log\omega|^2,
\end{equation}
with $\beta>0$ and $\gamma\geq 0$. Then $\omega\leq C(n,\beta,\gamma)R^{-2}$ on $K_{1/2}$. The proof of this fact, which is the content of Lemma \ref{lem:localization_lemma}, is a localization based on the cutoff argument of Ecker--Huisken \cite{EckerHuisken_InteriorEstimates}. Hence, the gradient error term is harmless, because at the maximum of the localized function, $\nabla\log\omega$ equals minus the gradient of the logarithm of the cutoff, which is controlled. No entropy or curvature bounds are needed, since the only used ingredients are the identity $(\partial_t-\Delta)|x-x_0|^2=-2n$ and the compactness of the parabolic cylinders. 

\subsection{Paper structure}

The paper is organized as follows. In Section \ref{sec:notation_and_equations_sec_2} we fix the notation and we recall the evolution equations used throughout the rest of the paper. Section \ref{sec:localization_lemma_sec_3} contains the precise statement and the proof of the localization lemma mentioned in the introduction, while Section \ref{sec:pointwise_diff_inequality_sec_4} proves that $\omega_{1/6}$ satisfies \eqref{eq:needed_diff_inequality_proof_sketch} using Naff’s gradient estimates and the lower bound on the gradient terms adapted from \cite{IRR_translators}. Section \ref{sec:result_proofs} contains the proofs of Theorem \ref{thm:main_result_planarity_estimate} and of Corollary \ref{cor:lynch_planarity_results_sigma_extension}, and Remark \ref{rem:lynch_approach} on Lynch’s approach. Lastly, Section \ref{sec:ancient_sec_6} deals with ancient solutions. It shows that containment in an affine subspace propagates forward in time, and it proves Theorem \ref{thm:codim_reduction_ancient}.

\subsection*{Acknowledgements}

The author is deeply grateful to Stephen Lynch for his generous help and suggestions. The author also thanks Debora Impera, Stephen Lynch and Michele Rimoldi for encouraging the publication of this work.

\subsection*{Added note}

This work was almost completed when Cheng \cite{Cheng_PlanarityEstimate} posted a preprint containing, among other results, a local planarity estimate in arbitrary codimension without curvature assumptions, and the resulting codimension reduction for space-time proper ancient solutions. The two works were carried out independently.

Both works localize the quantity $\omega_\sigma$ of \cite{LNZ_Planarity} by a cutoff argument at a maximum point. Cheng assumes uniform $c_n^{\mathrm{Naff}}$-pinching, under which the gradient terms are nonnegative by \cite{Naff_Planarity,LNZ_Planarity}. His argument then has the same structure as Remark \ref{rem:lynch_approach}, and his exponent depends on $n$ and the pinching. We also rely only on Naff’s estimates, but we do not need the gradient terms to be nonnegative. This allows us to reach $\frac{4}{3n}$ for every $n\geq5$ with a range of exponents independent of $\varepsilon_0$. As a result, we improve the pinching constant of the local estimate for $n\in\{5,6\}$ and that of the codimension reduction for $2\leq n\leq 6$. 

\subsection*{AI Disclosure} 

Claude Opus $5.5$ was used in preparing the manuscript. In the proof of Lemma \ref{lem:forward_confinement}, it proposed the exponential weight $e^{-(t-t_0)}$ in the definition of $\rho$, as already appears in the proof of \cite[Theorem $1.3$]{Ma_ParabolicOmoriYau}. The weight produces the term $-\rho$ in \eqref{eq:ineq_rho_forward_lemma}, which, combined with \cite[Theorem $1.2$]{Ma_ParabolicOmoriYau}, gives the contradiction. Claude also checked the mathematical arguments and improved the language and exposition. The author independently verified all mathematical content and takes full responsibility for it.

\section{Notation and known results}\label{sec:notation_and_equations_sec_2}

We use the same notation adopted in \cite{IRR_shrinkers,IRR_translators}, which we quickly recall. Let $M^n$ be a connected $n$-dimensional manifold immersed in the Euclidean space $\mathbb R^{n+p}$. The second fundamental form $B$ is defined as the normal component of the flat ambient covariant derivative $\overline \nabla$, and $\mathbf H\doteq \operatorname{tr}_gB$ is the mean curvature vector, where $g$ is the induced metric on $M$. Where needed, let $\{\nu_\alpha\}_{\alpha=n+1}^{n+p}$ be a local orthonormal frame for the normal bundle. Whenever $|\mathbf H|>0$, we take $\nu_{n+1}=\nu\doteq\mathbf H/|\mathbf H|$. For each $\alpha$, set $h^\alpha\doteq \langle B,\nu_\alpha\rangle$, so that the second fundamental form orthogonally decomposes as $B=B_\nu+B_I$, where $B_\nu\doteq h^{n+1}\otimes\nu$ and $B_I\doteq \sum_{\alpha\geq n+2}h^\alpha\otimes\nu_\alpha$. The second term, $B_I$, is traceless. The tracefree principal part is defined as $\mathring B_\nu\doteq \mathring h^{n+1}\otimes\nu$, with $\mathring h^{n+1}\doteq h^{n+1}-\frac{|\mathbf H|}{n}g$. Since $\operatorname{tr}h^{n+1}=|\mathbf H|$, the tensor $\mathring h^{n+1}$ is indeed tracefree. 

We denote by $\nabla$ the induced connection on the tangent bundle, defined as $\nabla_XY\doteq (\overline\nabla_XY)^\top$ with $X,Y\in\Gamma(TM)$, where $(\cdot)^\top$ is the tangential projection. On the other hand, we write $\nabla^\perp$ for the induced connection on the normal bundle. For $X\in\Gamma(TM)$ and $\xi\in\Gamma(NM)$, it is given by $\nabla^\perp_X\xi\doteq(\overline\nabla_X\xi)^\perp$, where $(\cdot)^\perp$ is the projection on the normal bundle. Together with $\nabla$, the connection $\nabla^\perp$ also induces a connection on bundles of the form $(T^*M)^{\otimes k}\otimes NM$. We denote this induced connection by $\nabla^\perp$ as well.

Throughout this paper we assume that the flow satisfies the uniform pinching with the Andrews--Baker constant $\frac{4}{3n}$, i.e.
\begin{equation}\label{eq:pinching}
    |B|^2\leq\left(\frac{4}{3n}-\varepsilon_0\right)|\mathbf H|^2,
\end{equation}
for some $\varepsilon_0>0$. Thus, it is natural to define the pinching function $f$ as $f\doteq\frac{4}{3n}|\mathbf H|^2-|B|^2$, which, along a mean curvature flow, satisfies
\begin{equation}\label{eq:evol_equation_f}
    (\partial_t-\Delta)f=2(\mathcal R_f+\mathcal G_f),
\end{equation}
as derived in \cite[$(4.2)$]{Naff_Planarity} and reproved in \cite[Corollary 5.7]{IRR_translators} in the elliptic setting of translators. Here, $\Delta=\Delta_{g(t)}$ is the Laplacian of the evolving metric. The two terms are
\begin{align*}
    \mathcal R_f&\doteq \frac{4}{3n}|S_{\mathbf H}|^2-\sum_{\alpha,\beta}[\operatorname{tr}(h^\alpha h^\beta)]^2-\sum_{\alpha,\beta}|[h^\alpha,h^\beta]|^2,
    \\\mathcal G_f&\doteq |\nabla^\perp B|^2-\frac{4}{3n}|\nabla^\perp\mathbf H|^2,
\end{align*}
where $S_{\mathbf H}$ is the shape operator in the normal direction $\mathbf H$. It is defined by the relation $\langle S_\xi X,Y\rangle=\langle B(X,Y),\xi\rangle$, for $\xi\in\Gamma(NM)$.
Since $|B|^2=|\mathring B_\nu|^2+\frac{1}{n}|\mathbf H|^2+|B_I|^2$,
\begin{equation}\label{eq:Q_A_I_trlessh_identity}
    f+|B_I|^2+|\mathring B_\nu|^2=\frac{1}{3n}|\mathbf H|^2.
\end{equation}
As a consequence, if $|\mathbf H|>0$ and $|B|^2\leq\left(\frac{4}{3n}-\varepsilon_0\right)|\mathbf H|^2$ for some $\varepsilon_0\in\left(0,\frac{1}{3n}\right)$, then
\begin{equation}\label{eq:bound_Q_A_I_u}
    0<\varepsilon_0|\mathbf H|^2\leq f\leq \frac{|\mathbf H|^2}{3n},\qquad |B_I|^2\leq\frac{|\mathbf H|^2}{3n}-f\leq\frac{|\mathbf H|^2}{3n},\qquad u\leq \overline u\doteq \frac{1-3n\varepsilon_0}{3n\varepsilon_0},
\end{equation}
where
\begin{equation}\label{eq:def_u}
    u\doteq \frac{|B_I|^2}{f}. 
\end{equation}
With the convention $B_I\doteq0$ where $\mathbf H=0$, the bound $|B_I|^2\leq\frac{1}{3n}|\mathbf H|^2$ holds everywhere, and $|B_I|^2$ is continuous.

The evolution equation of $f$ satisfies the following inequality.
\begin{lem}\label{lem:inequality_evol_equation_f}
    Let $x$ be a mean curvature flow, and assume that $|B|^2\leq\frac{4}{3n}|\mathbf H|^2$. Then
    \begin{equation*}
        (\partial_t-\Delta)f\geq \frac{2(5n-8)}{3n(n+2)}|\nabla^\perp\mathbf H|^2+2\mathcal R_f\geq 0.
    \end{equation*}
\end{lem}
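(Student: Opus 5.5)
The plan is to start from the evolution equation \eqref{eq:evol_equation_f}, $(\partial_t-\Delta)f=2(\mathcal R_f+\mathcal G_f)$, and treat the gradient term and the reaction term separately. Two facts are needed: a lower bound $\mathcal G_f\geq\frac{5n-8}{3n(n+2)}|\nabla^\perp\mathbf H|^2$, which holds with no pinching assumption, and nonnegativity $\mathcal R_f\geq0$, which is where $|B|^2\leq\frac4{3n}|\mathbf H|^2$ enters. Together they give both inequalities of the lemma.

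\emph{Gradient term.} I will use the standard Kato-type inequality $|\nabla^\perp B|^2\geq\frac{3}{n+2}|\nabla^\perp\mathbf H|^2$, which relies on Codazzi. In coordinates, $\nabla^\perp_iB_{jk}$ is totally symmetric in $i,j,k$ with values in the normal bundle. Define the symmetric tensor
\[
E_{ijk}\doteq\frac1{n+2}\bigl(g_{ij}\nabla^\perp_k\mathbf H+g_{ik}\nabla^\perp_j\mathbf H+g_{jk}\nabla^\perp_i\mathbf H\bigr).
\]
Set $F\doteq\nabla^\perp B-E$. Since every trace of $\nabla^\perp B$ equals $\nabla^\perp\mathbf H$ and the trace of $E$ also equals $\nabla^\perp\mathbf H$, the tensor $F$ is totally tracefree. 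Therefore $\langle E,F\rangle=0$. A direct computation gives $|E|^2=\frac{3}{n+2}|\nabla^\perp\mathbf H|^2$, so $|\nabla^\perp B|^2\geq|E|^2=\frac{3}{n+2}|\nabla^\perp\mathbf H|^2$. Subtracting the $\frac4{3n}$ term and using
\[
\frac3{n+2}-\frac4{3n}=\frac{5n-8}{3n(n+2)}
\]
yields the first inequality. The coefficient is positive because $n\geq2$.

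\emph{Reaction term.} This is the Andrews--Baker computation, and I expect it to be the main obstacle. At points where $\mathbf H=0$, pinching forces $B=0$, so $\mathcal R_f=0$. Where $|\mathbf H|>0$, I will use the frame with $\nu_{n+1}=\nu$ and split every sum into $\nu$-components and $I$-components:
\[
|S_{\mathbf H}|^2=|\mathbf H|^2\bigl(|\mathring h^{n+1}|^2+\tfrac1n|\mathbf H|^2\bigr),\qquad \sum_{\alpha,\beta}[\operatorname{tr}(h^\alpha h^\beta)]^2=\bigl(|\mathring h^{n+1}|^2+\tfrac{|\mathbf H|^2}{n}\bigr)^2+2\sum_{\alpha\geq n+2}[\operatorname{tr}(\mathring h^{n+1}h^\alpha)]^2+\sum_{\alpha,\beta\geq n+2}[\operatorname{tr}(h^\alpha h^\beta)]^2,
\]
together with the corresponding split of the commutator sum.

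The mixed terms will be handled by Cauchy--Schwarz and the estimate $|[\mathring h^{n+1},h^\alpha]|^2\leq2|\mathring h^{n+1}|^2|h^\alpha|^2$. The purely normal block will be handled by the Li--Li inequality
\[
\sum_{\alpha,\beta\geq n+2}\bigl([\operatorname{tr}(h^\alpha h^\beta)]^2+|[h^\alpha,h^\beta]|^2\bigr)\leq\tfrac32|B_I|^4.
\]
This reduces the claim to a polynomial inequality in $|\mathring B_\nu|^2$, $|B_I|^2$ and $|\mathbf H|^2$.

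I would then substitute $|\mathbf H|^2=3n(f+|B_I|^2+|\mathring B_\nu|^2)$ from \eqref{eq:Q_A_I_trlessh_identity} and verify that the resulting expression is bounded below by a nonnegative combination of $f$ times quadratic quantities. The hypothesis supplies $f\geq0$. The delicate point is that the constant $\frac4{3n}$ is exactly the value at which the mixed terms can be absorbed. If the bookkeeping becomes heavy, I would instead cite \cite{AndrewsBaker_MCFPinchedSubmanifolds} (see also \cite[Section 4]{Naff_Planarity}), where $\mathcal R_f\geq0$ is shown precisely under $|B|^2\leq\frac4{3n}|\mathbf H|^2$.

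Combining the two steps gives
\[
(\partial_t-\Delta)f\geq\frac{2(5n-8)}{3n(n+2)}|\nabla^\perp\mathbf H|^2+2\mathcal R_f\geq0.
\]
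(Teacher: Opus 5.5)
Your plan has the same structure as the paper's proof. You bound $\mathcal G_f$ with the Kato inequality $|\nabla^\perp B|^2\geq\frac{3}{n+2}|\nabla^\perp\mathbf H|^2$; your splitting $\nabla^\perp B=E+F$ with $F$ totally tracefree is a correct proof of it, and the constant $\frac{5n-8}{3n(n+2)}$ is right. You use $B=0$ where $\mathbf H=0$, and you cite a reaction estimate where $|\mathbf H|>0$. The paper quotes \cite[$(4.13)$]{Naff_Planarity}, namely $\mathcal R_f\geq 6f|B_I|^2+4f|\mathring B_\nu|^2+3f^2$. If you rely on that citation, your proof is complete and coincides with the paper's.

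The self-contained computation you sketch for $\mathcal R_f$, however, does not close with the estimates you list. Set $P\doteq|\mathring B_\nu|^2$, $Q\doteq|B_I|^2$ and substitute $|\mathbf H|^2=3n(f+P+Q)$. The $\nu$-block then gives exactly
\[
\tfrac{4}{3n}|S_{\mathbf H}|^2-\bigl(P+\tfrac1n|\mathbf H|^2\bigr)^2=3f^2+4fP+6fQ+4PQ+3Q^2 .
\]
From this you subtract the mixed terms $2\sum_{\alpha\geq n+2}\bigl([\operatorname{tr}(\mathring h^{n+1}h^\alpha)]^2+|[\mathring h^{n+1},h^\alpha]|^2\bigr)$ and the $I$-block, which is at most $\frac32Q^2$ by Li--Li. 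If you bound the two mixed pieces separately, by Cauchy--Schwarz and by $|[\mathring h^{n+1},h^\alpha]|^2\leq2|\mathring h^{n+1}|^2|h^\alpha|^2$, they cost $2PQ+4PQ=6PQ$. What remains is the lower bound $3f^2+4fP+6fQ-2PQ+\frac32Q^2$. This is negative, for instance, when $f=0$ and $0<Q<\frac43P$, which the non-strict pinching in the lemma allows. So the final verification step you describe would fail.

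The missing idea is to estimate the mixed terms jointly, in a frame diagonalizing $h^{n+1}$. Let $\mu_i$ be the eigenvalues of $\mathring h^{n+1}$. The trace term sees only the diagonal entries of $h^\alpha$:
\[
[\operatorname{tr}(\mathring h^{n+1}h^\alpha)]^2\leq P\sum_i(h^\alpha_{ii})^2 .
\]
The commutator sees only the off-diagonal entries:
\[
|[\mathring h^{n+1},h^\alpha]|^2=\sum_{i\neq j}(\mu_i-\mu_j)^2(h^\alpha_{ij})^2\leq 2P\sum_{i\neq j}(h^\alpha_{ij})^2 .
\]
Hence the mixed terms cost at most $4PQ$, which cancels the $4PQ$ above exactly. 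You then get $\mathcal R_f\geq 3f^2+4fP+6fQ+\frac32Q^2\geq 0$, which is Naff's bound with an extra nonnegative term. This exact cancellation is where the threshold $\frac4{3n}$ enters, as you anticipated.
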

\begin{proof}
    Since $\nabla^\perp B$ is, as a consequence of Codazzi equations, totally symmetric, it satisfies the well-known Kato inequality $|\nabla^\perp B|^2\geq \frac{3}{n+2}|\nabla^\perp\mathbf H|^2$; see, e.g., \cite{Huisken_FlowConvexSurfacesIntoSpheres} and \cite[Lemma 2.2]{Lynch_PlanarityEstimate}. So, $\mathcal G_f\geq\frac{5n-8}{3n(n+2)}|\nabla^\perp\mathbf H|^2\geq 0$. Where $|\mathbf H|>0$, $\mathcal R_f\geq 6f|B_I|^2+4f|\mathring B_\nu|^2+3f^2\geq 0$ as proved in \cite[$(4.13)$]{Naff_Planarity}. On the other hand, where $|\mathbf H|=0$ the pinching forces $B=0$, and hence $\mathcal R_f=0$. 
\end{proof}

As shown in \cite[Proposition $3.2$]{Naff_Planarity} and rederived in \cite[Proposition 5.23]{IRR_translators} for translators, where $|\mathbf H|>0$ the evolution equation of $|B_I|^2$ is 
\begin{equation}\label{eq:evol_equation_squared_norm_BI}
    (\partial_t-\Delta)|B_I|^2=-2|\nabla^\perp B_I|^2+2\mathcal R_I+4\sum_{i,j,k}Q_{ijk}C_{ijk},
\end{equation}
with
\begin{align*}
    \mathcal R_I&\doteq \sum_{\alpha,\beta\geq n+2}[\operatorname{tr}(h^\alpha h^\beta)]^2+\sum_{\alpha,\beta\geq n+2}|[h^\alpha,h^\beta]|^2+\sum_{\beta\geq n+2}|[h^{n+1},h^\beta]|^2,
    \\Q_{ijk}&\doteq\nabla_kh^{n+1}_{ij}-h^{n+1}_{ij}\nabla_k\log|\mathbf H|,
    \\ C_{ijk}&\doteq\bigl\langle B_I(e_i,e_j),\nabla^\perp_k\nu\bigr\rangle.
\end{align*}
Combining \eqref{eq:evol_equation_f} and \eqref{eq:evol_equation_squared_norm_BI},
\begin{equation}\label{eq:key_identity}
    \begin{aligned}
        (\partial_t-\Delta)|B_I|^2-(1-\sigma) u(\partial_t-\Delta)f&=-2[(1-\sigma) u\mathcal R_f-\mathcal R_I]
        \\&\qquad-2\Big[|\nabla^\perp B_I|^2+(1-\sigma) u\mathcal G_f-2\sum Q_{ijk}C_{ijk}\Big],
    \end{aligned}
\end{equation}
for $\sigma\in(0,1)$,
which is the parabolic counterpart to \cite[(7.1)]{IRR_translators} with $\theta=1-\sigma$. The two brackets collect, respectively, the reaction and the gradient terms. The planarity estimates of \cite{Naff_Planarity,LNZ_Planarity,Lynch_PlanarityEstimate,IRR_translators} all rest on lower bounds for these two brackets. The difference is in the choice of $\sigma$ and in the treatment of the gradient bracket.  

In \cite{Naff_Planarity}, both brackets are shown to be pointwise nonnegative. For the pinching function $f$, this works only when $n\geq 8$, while in lower dimensions this gives rise to the constant $c_n^{\mathrm{Naff}}$, as explained in detail in \cite[Appendix B2]{LNZ_Planarity} and in \cite[Subsection $7.1$]{IRR_translators}. 

In \cite{Lynch_PlanarityEstimate}, the author takes $\sigma\in\left(0,\frac{1}{30}\right]$ and, for $n\geq5$, makes the gradient bracket nonnegative through a sharp Kato-type inequality; see Remark \ref{rem:lynch_approach}.

In \cite{IRR_translators}, one takes $\sigma=\frac{1}{6}$. The reaction bracket is bounded below by $\frac{7}{2}|B_I|^4$, see \cite[Lemma 7.2]{IRR_translators}, while the gradient bracket is not shown to be nonnegative. Instead, the gradient terms produced by the logarithmic derivative of $\omega_{1/6}\doteq |B_I|^{12}/f^5$ are added to it, and the sum is bounded below by a negative multiple of $|B_I|^2|\nabla\log\omega_{1/6}|^2$; see \cite[Lemma $7.11$]{IRR_translators}. This lower bound vanishes at critical points of $\omega_{1/6}$. However, it is proved only where $u$ is bounded away from zero. In Subsection \ref{subsec:removing_boundness} we remove this restriction and we adapt the estimate to the parabolic setting.

\section{Localization lemma}\label{sec:localization_lemma_sec_3}

Fix a point $x_0\in\mathbb R^{n+p}$, and consider the function $r(x)\doteq |x-x_0|^2$. Along a mean curvature flow $x$, it satisfies
\begin{equation}\label{eq:evolution_equations_r}
    (\partial_t-\Delta)r=-2n,\qquad |\nabla r|^2=4|(x-x_0)^\top|^2\leq 4r.
\end{equation}

\begin{lem}\label{lem:localization_lemma}
    Let $R>0$, $x_0\in\mathbb R^{n+p}$, and let $x:M^n\times[-R^2,0]\to\mathbb R^{n+p}$ be a MCF with $K_1$ compact. Let $\omega\geq 0$ be continuous on $K_1$ and smooth on $K_1\cap\{\omega>0\}$, and assume that on the latter set it satisfies
    \begin{equation}\label{eq:hypothesis_bound_evol_equation_omega}
        (\partial_t-\Delta)(\log\omega)\leq -\beta\omega+\gamma|\nabla\log\omega|^2,\qquad \beta>0,\:\gamma\geq 0.
    \end{equation}
    Then
    \begin{equation*}
        \omega\leq \frac{64(4n+9+16\gamma)}{27\beta\;R^2}\ \text{ on }K_{1/2},
    \end{equation*}
    where the sets $K_\lambda$ are defined as in \eqref{eq:def_K_lambda}.
\end{lem}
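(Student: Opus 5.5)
The plan is an Ecker--Huisken style cutoff argument at a space-time maximum point. Set $\eta\doteq 1-r/R^2$ and $\tau\doteq 1+t/R^2$, and consider the cutoff
\begin{equation*}
    \varphi\doteq \eta^2\tau \quad\text{on } K_1,
\end{equation*}
with $\varphi$ nonnegative on $K_1$. It vanishes on the parabolic boundary $\{r=R^2\}\cup\{t=-R^2\}$. On $K_{1/2}$ we have $r\leq R^2/4$ and $t\geq -R^2/4$, hence $\varphi\geq(3/4)^2(3/4)=27/64$. This is exactly where the factor $64/27$ in the statement should come from. It therefore suffices to show
\begin{equation*}
    \beta\,\varphi\,\omega\leq \frac{4n+9+16\gamma}{R^2}\quad\text{on } K_1.
\end{equation*}

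Since $K_1$ is compact and $\varphi\omega$ is continuous, $\varphi\omega$ attains its maximum at some $(q_0,t_0)\in K_1$. If this maximum is zero there is nothing to prove. Otherwise $\omega>0$, $r<R^2$ and $t_0>-R^2$ at that point. So we are in the open part of $K_1$ in space, inside the region where $\omega$ is smooth, and $t_0\in(-R^2,0]$.

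At $(q_0,t_0)$ we have $\nabla\log(\varphi\omega)=0$ and $\Delta\log(\varphi\omega)\leq0$. Since the maximum is taken over earlier times as well, the one-sided time derivative satisfies $\partial_t\log(\varphi\omega)\geq0$, which is harmless when $t_0=0$. In particular $\nabla\log\omega=-\nabla\log\varphi$. Plugging into \eqref{eq:hypothesis_bound_evol_equation_omega} gives
\begin{equation*}
    0\leq(\partial_t-\Delta)\log(\varphi\omega)\leq -\beta\omega+\gamma|\nabla\log\varphi|^2+(\partial_t-\Delta)\log\varphi .
\end{equation*}

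Next we compute the cutoff terms using \eqref{eq:evolution_equations_r}. We have $(\partial_t-\Delta)\eta=2n/R^2$, $|\nabla\eta|^2\leq 4r/R^4\leq 4/R^2$ and $\partial_t\tau=1/R^2$. Hence
\begin{equation*}
    (\partial_t-\Delta)\log\varphi=\frac{2(\partial_t-\Delta)\eta}{\eta}+\frac{2|\nabla\eta|^2}{\eta^2}+\frac{1}{R^2\tau},\qquad |\nabla\log\varphi|^2=\frac{4|\nabla\eta|^2}{\eta^2}.
\end{equation*}
Multiplying the inequality by $\varphi=\eta^2\tau$ and using $\eta,\tau\leq1$, every term becomes bounded by a constant over $R^2$:
\begin{equation*}
    \beta\varphi\omega\leq \frac{4n\,\eta\tau+8\tau+\eta^2+16\gamma\tau}{R^2}\leq\frac{4n+9+16\gamma}{R^2}.
\end{equation*}
This bound holds at the maximum point, hence everywhere on $K_1$. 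Dividing by $\varphi\geq 27/64$ on $K_{1/2}$ gives the claim.

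The only delicate point is the choice of exponent on $\eta$. With $\varphi=\eta\tau$, the gradient terms $\gamma|\nabla\log\varphi|^2$ and $|\nabla\eta|^2/\eta^2$ would leave a factor $1/\eta$ after multiplying by $\varphi$, and this blows up near $r=R^2$. Squaring $\eta$ absorbs these terms, and this is also what produces the cube $(3/4)^3$. The remaining points need only routine care: $r$ is smooth, so $\varphi$ is smooth where $\eta>0$, and the one-sided time derivative handles the top slice $t_0=0$.
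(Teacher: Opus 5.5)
Your proof is correct and is essentially the paper's argument. Your cutoff $\eta^2\tau$ is the paper's weight $(t+R^2)(R^2-r)^2$ divided by $R^6$, and the maximum-point computation gives the same bound $(4n+9+16\gamma)/R^2$ and the same factor $27/64$ on $K_{1/2}$.
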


\begin{proof}
    We follow the cutoff argument presented in the proof of \cite[Theorems $2.1$ and $3.1$]{EckerHuisken_InteriorEstimates}. 
    Set $\Xi\doteq (t+R^2)\eta\omega\geq 0$, where $\eta=\eta(r)=(R^2-r)^2$. The function $\Xi$ is continuous on $K_1$ and vanishes on the parabolic boundary $\partial_PK_1=\{(q,t)\in K_1\:|\:t=-R^2\text{ or }r(q,t)=R^2\}$. If $\Xi\equiv 0$ on $K_1$ there is nothing to prove. Otherwise, by compactness of $K_1$, $\Xi$ attains a positive maximum on $K_1$ at some point $(q_*,t_*)$ in the interior, i.e. with $t_*>-R^2$ and $r(q_*,t_*)<R^2$. At that point, $\nabla\log\Xi=0$ and $(\partial_t-\Delta)(\log\Xi)\geq 0$. Moreover, by \eqref{eq:evolution_equations_r},
    \begin{equation}\label{eq:evol_equations_log_eta}
        (\partial_t-\Delta)(\log\eta)=\frac{4n(R^2-r)+2|\nabla r|^2}{\eta},\qquad |\nabla\log\eta|^2=\frac{4|\nabla r|^2}{\eta}. 
    \end{equation}
    Keeping in mind that $\nabla\log\Xi=0$ and $\nabla\log\omega=-\nabla\log\eta$ at $(q_*,t_*)$, there it holds that
    \begin{equation*}
        0\leq(\partial_t-\Delta)(\log\Xi)=\frac{1}{t+R^2}+(\partial_t-\Delta)(\log\eta)+(\partial_t-\Delta)(\log\omega).
    \end{equation*}
    Substituting \eqref{eq:hypothesis_bound_evol_equation_omega} and \eqref{eq:evol_equations_log_eta},
    \begin{equation*}
        0\leq \frac{1}{t+R^2}+\frac{4n(R^2-r)+(2+4\gamma)|\nabla r|^2}{\eta}-\beta\omega\ \text{ at }(q_*,t_*).
    \end{equation*}
    Multiplying by $(t+R^2)\eta$ yields
    \begin{equation*}
        \beta\Xi\leq \eta+4n(R^2-r)(t+R^2)+(2+4\gamma)(t+R^2)|\nabla r|^2\ \text{ at }(q_*,t_*).
    \end{equation*}
    Since $t+R^2\leq R^2$, $\eta\leq R^4$ and $|\nabla r|^2\leq 4r\leq 4R^2$,
    \begin{equation*}
        \beta\Xi\leq (4n+9+16\gamma)R^4,
    \end{equation*}
    which holds on the whole $K_1$ since $(q_*,t_*)$ was the maximum point.
    Over the smaller set $K_{1/2}$, $(t+R^2)\eta\geq\frac{27}{64}R^6$, which gives the claim together with the previous inequality.
\end{proof}

\section{A differential inequality for the planarity quantity}\label{sec:pointwise_diff_inequality_sec_4}

Assume, in this section, that $n\geq 5$, $|\mathbf H|>0$, $|B_I|>0$ and $|B|^2\leq\left(\frac{4}{3n}-\varepsilon_0\right)|\mathbf H|^2$, with $\varepsilon_0\in\left(0,\frac{1}{3n}\right)$. Define, for $\sigma\in(0,1)$,
\begin{equation*}
    \omega_\sigma\doteq fu^\frac{1}{\sigma},\qquad \zeta\doteq\sigma\nabla\log\omega_\sigma.
\end{equation*}

\begin{lem}\label{lem:identity_log_omega}
    At every point where $|B_I|>0$, the following equality
    \begin{align*}
        \sigma(\partial_t-\Delta)(\log\omega_\sigma)&=\frac{1}{|B_I|^2}\left[(\partial_t-\Delta)|B_I|^2-(1-\sigma) u(\partial_t-\Delta)f\right]+|\zeta|^2
        \\&\qquad+2(1-\sigma)\langle\zeta,\nabla\log f\rangle-\sigma(1-\sigma)|\nabla\log f|^2
    \end{align*}
    holds. As a consequence, inserting \eqref{eq:key_identity}, 
        \begin{equation}\label{eq:translator_approach}
            \sigma(\partial_t-\Delta)(\log\omega_\sigma)=-\frac{2}{|B_I|^2}[(1-\sigma) u\mathcal R_f-\mathcal R_I]-\frac{2}{|B_I|^2}\mathcal G_\zeta,
        \end{equation}
        where $\mathcal G_\zeta$ is as in \cite[$(7.10)$]{IRR_translators}.
\end{lem}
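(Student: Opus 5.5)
We compute directly with logarithms. Since $\omega_\sigma=fu^{1/\sigma}$ and $u=|B_I|^2/f$, we have
\begin{equation*}
    \sigma\log\omega_\sigma=\log|B_I|^2-(1-\sigma)\log f .
\end{equation*}
For any positive smooth $\phi$, $(\partial_t-\Delta)\log\phi=\phi^{-1}(\partial_t-\Delta)\phi+|\nabla\log\phi|^2$. Applying this to $\phi=|B_I|^2$ and $\phi=f$ (both positive where $|B_I|>0$, by \eqref{eq:bound_Q_A_I_u}) gives
\begin{align*}
    \sigma(\partial_t-\Delta)(\log\omega_\sigma)&=\frac{(\partial_t-\Delta)|B_I|^2}{|B_I|^2}-(1-\sigma)\frac{(\partial_t-\Delta)f}{f}
    \\&\qquad+|\nabla\log|B_I|^2|^2-(1-\sigma)|\nabla\log f|^2.
\end{align*}
Because $\frac{1}{f}=\frac{u}{|B_I|^2}$, the first two terms combine into $\frac{1}{|B_I|^2}\bigl[(\partial_t-\Delta)|B_I|^2-(1-\sigma)u(\partial_t-\Delta)f\bigr]$, which is exactly the bracket in the statement.

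Next we rewrite the gradient terms in terms of $\zeta$. Differentiating the logarithmic identity gives $\zeta=\nabla\log|B_I|^2-(1-\sigma)\nabla\log f$, so $\nabla\log|B_I|^2=\zeta+(1-\sigma)\nabla\log f$. Squaring,
\begin{equation*}
    |\nabla\log|B_I|^2|^2=|\zeta|^2+2(1-\sigma)\langle\zeta,\nabla\log f\rangle+(1-\sigma)^2|\nabla\log f|^2.
\end{equation*}
Subtracting $(1-\sigma)|\nabla\log f|^2$ leaves the coefficient $(1-\sigma)^2-(1-\sigma)=-\sigma(1-\sigma)$ on $|\nabla\log f|^2$. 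This yields the first identity.

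For \eqref{eq:translator_approach}, we insert \eqref{eq:key_identity} into the bracket. This produces $-\frac{2}{|B_I|^2}[(1-\sigma)u\mathcal R_f-\mathcal R_I]$ together with
\begin{equation*}
    -\frac{2}{|B_I|^2}\Big[|\nabla^\perp B_I|^2+(1-\sigma)u\mathcal G_f-2\sum Q_{ijk}C_{ijk}\Big].
\end{equation*}
The leftover gradient terms
\begin{equation*}
    |\zeta|^2+2(1-\sigma)\langle\zeta,\nabla\log f\rangle-\sigma(1-\sigma)|\nabla\log f|^2
\end{equation*}
are then absorbed by multiplying them by $-\frac{|B_I|^2}{2}$ and placing them inside the gradient bracket. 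We define $\mathcal G_\zeta$ as the sum
\begin{align*}
    \mathcal G_\zeta\doteq{}&|\nabla^\perp B_I|^2+(1-\sigma)u\mathcal G_f-2\sum Q_{ijk}C_{ijk}
    \\&-\frac{|B_I|^2}{2}\Big(|\zeta|^2+2(1-\sigma)\langle\zeta,\nabla\log f\rangle-\sigma(1-\sigma)|\nabla\log f|^2\Big).
\end{align*}
The only real task is to confirm that this matches \cite[(7.10)]{IRR_translators} with $\theta=1-\sigma$. That source expression is written in the elliptic translator setting, where $\partial_t$ is replaced by the drift Laplacian. The drift terms cancel identically in the logarithmic chain rule, so the algebraic gradient combination is the same.

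I expect the main obstacle to be the bookkeeping: matching sign and normalization conventions, namely the factor $\frac12$ and $\theta=1-\sigma$, against (7.10). No analytic difficulty arises. Smoothness of $\log|B_I|^2$ needs $|B_I|>0$, and positivity of $f$ follows from \eqref{eq:bound_Q_A_I_u}.
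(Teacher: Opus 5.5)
Your proposal is correct and follows the paper's own proof. Like the paper, you apply the logarithmic chain rule to $\sigma\log\omega_\sigma=\log|B_I|^2-(1-\sigma)\log f$, substitute $\nabla\log|B_I|^2=\zeta+(1-\sigma)\nabla\log f$, and then obtain \eqref{eq:translator_approach} by inserting \eqref{eq:key_identity} and absorbing the leftover gradient terms into $\mathcal G_\zeta$; your explicit formula for $\mathcal G_\zeta$ is exactly the one these two identities force.
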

\begin{proof}
    To derive the first identity, apply the chain rule $(\partial_t-\Delta)(\log\varphi)=\frac{1}{\varphi}(\partial_t-\Delta)\varphi+|\nabla\log\varphi|^2$ to $\sigma\log\omega_\sigma=\log|B_I|^2-(1-\sigma)\log f$, and use that $\nabla\log|B_I|^2=\zeta+(1-\sigma)\nabla\log f$.
\end{proof}

\subsection{An algebraic result}

We use the notation of \cite{IRR_translators} with $\theta\doteq 1-\sigma$. To avoid clashes with time $t$, the cutoff $\eta$ of Lemma \ref{lem:localization_lemma} and the pinching constant $c$, we write $\mathsf s,\mathsf t,\mathsf c$ and $\hat\eta$ for the quantities $s,t,c$ and $\eta$ of Lemma $7.11$ and Appendix A of \cite{IRR_translators}. For $n\geq 5$ and $\sigma\in(0,1)$, set
\begin{equation*}
\chi \doteq (1-\sigma) \frac{5n-8}{n+2}, \quad
\tau \doteq \frac{3(n-1)}{n+2}, \quad
\hat\eta \doteq \frac{2\tau}{\chi}, \quad
\mu \doteq \frac{1}{2\sigma(1-\sigma)}.
\end{equation*}
Conditions (C$1$) and (C$2$) are those of \cite[Lemma A.$1$]{IRR_translators}, while in condition (C$3$) of the same paper we assume the strict inequality. Specifically,
\begin{equation*}
    \text{(C1)}\ \ 2\hat\eta+\tau\geq\mu+1,
    \qquad\text{(C2)}\ \ \tau+1\geq\frac{1-\sigma}{2\sigma},
    \qquad\text{(C3)}\ \ \chi>2,
\end{equation*}
so that, thanks to the last condition, $\tau-\hat\eta=\tau\frac{\chi-2}{\chi}>0$. For $\mathsf t\in[0,1)$ and $u>0$, let
\begin{align*}
    \Phi_0(\mathsf t)&\doteq\tau-\hat\eta+\hat\eta\,\mathsf t(2-\mathsf t)+\frac{(\tau-1)\mathsf t}{1-\mathsf t}-\mu\,\mathsf t(1-\mathsf t),
    \\\Phi(\mathsf t,u)&\doteq\Phi_0(\mathsf t)+u\left[\tau-1+2\mathsf t+\frac{(\tau-1)\mathsf t}{1-\mathsf t}+\frac{\chi(1+\mathsf tu)}{1-\mathsf t}\right]-\mu\,\mathsf t(1-\mathsf t)\left[\frac{(1+(1-\sigma)u)^2}{1+u}-1\right],
\end{align*}
as in \cite[(A.3)]{IRR_translators}.

Assuming conditions (C$1$)--(C$3$), then for every $\mathsf t\in[0,1)$ and $u>0$, the proof of \cite[Lemma A.$1$]{IRR_translators} shows that $\Phi_0(\mathsf t)\geq \tau-\hat\eta$ and
\begin{equation*}
    \Phi(\mathsf t,u)\geq\Phi_0(\mathsf t)+u\left[\tau+\chi-1+\mathsf t\left(1+\tau-\frac{1-\sigma}{2\sigma}(1-\mathsf t)\right)\right].
\end{equation*}

Since the coefficient $1+\tau-\frac{1-\sigma}{2\sigma}(1-\mathsf t)$ of $\mathsf t$ is nonnegative thanks to condition (C$2$) and the range of $1-\mathsf t$ (i.e. $1-\mathsf t\in(0,1]$), from the cited results it follows that
\begin{equation}\label{eq:key_algebraic_inequality}
    \Phi(\mathsf t,u)\geq \tau-\hat\eta+u(\tau+\chi-1). 
\end{equation}

\subsection{Dropping the assumption \texorpdfstring{$u\geq \underline u>0$}{u >= underline u > 0}}\label{subsec:removing_boundness}

At points where $|\mathbf H|>0$ and $|B_I|>0$, let
\begin{equation*}
    \mathsf s\doteq \frac{3nf}{|\mathbf H|^2},\qquad \mathsf t\doteq \frac{3n|\mathring B_\nu|^2}{|\mathbf H|^2}.
\end{equation*}

By \eqref{eq:Q_A_I_trlessh_identity} and \eqref{eq:def_u}, $\mathsf s(1+u)+\mathsf t=1$, from which, in particular, one deduces that $\mathsf t\in[0,1)$. Let 
\begin{equation*}
    \Lambda\doteq|B_I|^2\,\frac{3n|\mathring B_\nu|}{|\mathbf H|^2}\,\frac{1+u}{\sigma u}.
\end{equation*}
As shown in \cite[Lemma $7.11$]{IRR_translators},
\begin{equation}\label{eq:bound_G_zeta_Lemma_7.11}
    \mathcal G_\zeta\geq\left(\chi u+\frac{\Phi(\mathsf t,u)}{u}\right)|\mathsf c|^2+\Lambda\langle\zeta,\mathsf c\rangle-\frac{|B_I|^2}{2\sigma}|\zeta|^2,
\end{equation}
where $\mathsf c$ is a suitable vector field defined therein. Indeed, in the proof of \cite[Lemma $7.11$]{IRR_translators}, the hypothesis $u\geq \underline u>0$ is used only to ensure that $|B_I|>0$ and in a final step. Before that passage the proof bounds $\mathcal G_\zeta\geq\mathcal G_\zeta^\perp+\mathcal G_\zeta^\parallel$, with $\mathcal G_\zeta^\perp\geq 0$ by (C$3$) and $\mathcal G_\zeta^\parallel$ greater than or equal to the right-hand side of \eqref{eq:bound_G_zeta_Lemma_7.11}. The coefficient of $|\mathsf c|^2$ there equals $\chi u+\frac{\Phi(\mathsf t,u)}{u}$ thanks to the substitution $\mathsf s=\frac{1-\mathsf t}{1+u}$. All the estimates involved are pointwise and use neither the translator equation nor the value $\sigma=\frac{1}{6}$. In the final step, \cite{IRR_translators} bounds the coefficient of $|\mathsf c|^2$ from below by $\chi u$, using that $\Phi(\mathsf t,u)\geq 0$, which makes $u\geq\underline u>0$ needed to control $\frac{\Lambda^2}{4\chi u}$. We use the better bound \eqref{eq:key_algebraic_inequality} instead.

\begin{lem}\label{lem:independence_from_underline_u}
    Assume (C$1$)--(C$3$) hold. At every point where $|\mathbf H|>0$ and $|B_I|>0$,
    \begin{equation*}
        \mathcal G_\zeta\geq-\widetilde C|B_I|^2|\zeta|^2,\qquad \widetilde C\doteq \frac{1}{16\sigma^2(\tau-\hat\eta)}+\frac{1}{2\sigma}.
    \end{equation*}
\end{lem}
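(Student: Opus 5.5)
Starting from \eqref{eq:bound_G_zeta_Lemma_7.11}, the plan is to treat the right-hand side as a quadratic form in the vector $\mathsf c$ and minimize over $\mathsf c$. First we replace the coefficient of $|\mathsf c|^2$ by a positive lower bound. By \eqref{eq:key_algebraic_inequality},
\begin{equation*}
    \chi u+\frac{\Phi(\mathsf t,u)}{u}\geq \chi u+\frac{\tau-\hat\eta}{u}+(\tau+\chi-1)\geq \frac{\tau-\hat\eta}{u}\doteq A>0,
\end{equation*}
where $\tau-\hat\eta>0$ comes from (C$3$) and $\tau+\chi-1>0$. The key gain over \cite{IRR_translators} is that $A$ blows up as $u\to0$, whereas $\chi u$ alone degenerates. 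We expect this to be exactly what absorbs the $u$-dependence of $\Lambda$.

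Completing the square, $A|\mathsf c|^2+\Lambda\langle\zeta,\mathsf c\rangle\geq-\frac{\Lambda^2}{4A}|\zeta|^2$. This gives
\begin{equation*}
    \mathcal G_\zeta\geq-\left(\frac{\Lambda^2 u}{4(\tau-\hat\eta)}+\frac{|B_I|^2}{2\sigma}\right)|\zeta|^2,
\end{equation*}
and it remains to show $\Lambda^2u\leq\frac{1}{4\sigma^2}|B_I|^2$. Write $\Lambda=|B_I|^2\frac{3n|\mathring B_\nu|}{|\mathbf H|^2}\frac{1+u}{\sigma u}$. Then
\begin{equation*}
    \Lambda^2u=|B_I|^4\,\frac{9n^2|\mathring B_\nu|^2}{|\mathbf H|^4}\,\frac{(1+u)^2}{\sigma^2u}.
\end{equation*}
Now use $|B_I|^2=uf=u\,\mathsf s\frac{|\mathbf H|^2}{3n}$ and $\mathsf s=\frac{1-\mathsf t}{1+u}$, so $|B_I|^2=\frac{|\mathbf H|^2}{3n}\frac{u(1-\mathsf t)}{1+u}$. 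Also $\frac{3n|\mathring B_\nu|^2}{|\mathbf H|^2}=\mathsf t$. Substituting one factor of $|B_I|^2$,
\begin{equation*}
    \Lambda^2u=|B_I|^2\cdot\frac{u(1-\mathsf t)}{1+u}\cdot \frac{3n}{|\mathbf H|^2}\cdot \mathsf t\,\frac{(1+u)^2}{\sigma^2u}\cdot\frac{|\mathbf H|^2}{3n}=\frac{|B_I|^2}{\sigma^2}\,\mathsf t(1-\mathsf t)(1+u).
\end{equation*}
The powers of $u$ cancel except for the factor $(1+u)$, and $\mathsf t(1-\mathsf t)\leq\frac14$.

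The main obstacle is this leftover factor $(1+u)$, since the target constant $\widetilde C$ has no $u$-dependence. We plan to handle it by keeping the discarded terms of the coefficient rather than dropping them. Using $A'\doteq\chi u+\frac{\tau-\hat\eta}{u}$, which is at least $\frac{\tau-\hat\eta}{u}(1+u)$ because $\chi\geq 2>\tau-\hat\eta$ by (C$3$) and $\tau<3$, we get
\begin{equation*}
    \frac{\Lambda^2}{4A'}\leq\frac{\Lambda^2u}{4(\tau-\hat\eta)(1+u)}\leq\frac{|B_I|^2\,\mathsf t(1-\mathsf t)}{4\sigma^2(\tau-\hat\eta)}\leq\frac{|B_I|^2}{16\sigma^2(\tau-\hat\eta)}.
\end{equation*}
Adding the term $\frac{|B_I|^2}{2\sigma}|\zeta|^2$ yields the stated $\widetilde C$. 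If the inequality $\chi\geq\tau-\hat\eta$ needs checking, we would verify it directly from the definitions: since $\hat\eta>0$ and $\tau<3$ (which holds because $3(n-1)<3(n+2)$), we have $\tau-\hat\eta<3$, while $\chi>2$ is (C$3$). So we should instead compare $\chi u+\frac{\tau-\hat\eta}{u}\geq(\tau-\hat\eta)\frac{1+u}{u}$ using $\chi u\geq(\tau-\hat\eta)$ only when $u\geq1$. For $u<1$ we would use the extra term $\tau+\chi-1\geq \tau-\hat\eta$ together with $(1+u)/u\leq 2/u$. Either way the coefficient is at least $(\tau-\hat\eta)\frac{1+u}{u}$.
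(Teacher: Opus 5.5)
Your final route is the paper's proof. You bound the coefficient of $|\mathsf c|^2$ below by $d=\chi u+(\tau+\chi-1)+\frac{\tau-\hat\eta}{u}$ via \eqref{eq:key_algebraic_inequality}, complete the square, and use $\Lambda^2=|B_I|^2\frac{\mathsf t(1-\mathsf t)(1+u)}{\sigma^2 u}$. You then need $d\geq(\tau-\hat\eta)\frac{1+u}{u}$, which is exactly the paper's inequality $\frac{1+u}{du}\leq\frac{1}{\tau-\hat\eta}$.

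However, the write-up passes through a false step that you only partly retract. The claim $A'=\chi u+\frac{\tau-\hat\eta}{u}\geq(\tau-\hat\eta)\frac{1+u}{u}$ is equivalent to $\chi u\geq\tau-\hat\eta$. This fails for $u<\frac{\tau-\hat\eta}{\chi}$, which is precisely the small-$u$ regime this lemma exists to handle. So the displayed chain starting from $\frac{\Lambda^2}{4A'}$ is invalid as written. Your justification of $\chi\geq\tau-\hat\eta$ is also a non sequitur: ``$\tau-\hat\eta<3$ and $\chi>2$'' does not imply it. The inequality is true, but the argument you give does not prove it.

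The repair is already in your closing remark, and it needs no case split. Never discard the constant term $\tau+\chi-1$. Then
\begin{equation*}
d-(\tau-\hat\eta)\frac{1+u}{u}=\chi u+\chi-1+\hat\eta>0
\end{equation*}
for every $u>0$, since $\chi>2$ and $\hat\eta>0$. With $d$ in place of $A'$, your chain $\frac{\Lambda^2}{4d}\leq\frac{\Lambda^2u}{4(\tau-\hat\eta)(1+u)}\leq\frac{|B_I|^2}{16\sigma^2(\tau-\hat\eta)}$ is correct and coincides with the paper's argument.
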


\begin{proof}
    Thanks to \eqref{eq:key_algebraic_inequality}, the coefficient of $|\mathsf c|^2$ in \eqref{eq:bound_G_zeta_Lemma_7.11} is at least
    \begin{equation*}
        d\doteq \chi u+\tau+\chi-1+\frac{\tau-\hat\eta}{u}>0.
    \end{equation*}
    Completing the square as in \cite[Lemma $7.11$]{IRR_translators} with $d$ in place of $\chi u$,
    \begin{equation*}
        d|\mathsf c|^2+\Lambda\langle\zeta,\mathsf c\rangle=\left|\sqrt d\mathsf c+\frac{\Lambda}{2\sqrt d}\zeta\right|^2-\frac{\Lambda^2}{4d}|\zeta|^2\geq -\frac{\Lambda^2}{4d}|\zeta|^2.
    \end{equation*}
    Notice that $|B_I|^2=uf=\frac{\mathsf su}{3n}|\mathbf H|^2$, which, together with $\mathsf s(1+u)=1-\mathsf t$, implies
    \begin{equation*}
        \Lambda^2=|B_I|^2\frac{\mathsf t(1-\mathsf t)(1+u)}{\sigma^2u}\leq |B_I|^2\frac{1+u}{4\sigma^2 u},
    \end{equation*}
    since $\mathsf t(1-\mathsf t)\leq\frac{1}{4}$ for every $\mathsf t\in[0,1)$. Furthermore,
    \begin{equation*}
        \frac{1+u}{du}=\frac{1+u}{\chi u^2+(\tau+\chi-1)u+\tau-\hat\eta}\leq\frac{1}{\tau-\hat\eta}. 
    \end{equation*}
    The last inequality is equivalent to $u(\chi u+\chi-1+\hat \eta)\geq 0$, which holds true since $u>0$, $\chi>2$ and $\hat\eta>0$. Thus,
    \begin{equation*}
        \mathcal G_\zeta\geq -\left(\frac{\Lambda^2}{4d}+\frac{|B_I|^2}{2\sigma}\right)|\zeta|^2\geq -\widetilde C|B_I|^2|\zeta|^2.
    \end{equation*}
\end{proof}

\subsection{The differential inequality for \texorpdfstring{$\sigma=\frac{1}{6}$}{sigma=1/6}}

\begin{lem}\label{lem:result_via_Naff_inequalities}
    Let $\sigma=\frac{1}{6}$, with $\tau$ and $\hat \eta$ the corresponding constants. At every point where $|\mathbf H|>0$ and $|B_I|>0$, inequality \eqref{eq:hypothesis_bound_evol_equation_omega} holds for $\omega_{1/6}$ with 
    \begin{equation*}
        \beta=42\overline u^{\,-5},\qquad \gamma=1+\frac{3}{4(\tau-\hat\eta)}. 
    \end{equation*}
\end{lem}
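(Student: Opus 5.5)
The plan is to start from identity \eqref{eq:translator_approach} of Lemma \ref{lem:identity_log_omega} with $\sigma=\frac16$. We bound the reaction bracket from below using \cite[Lemma 7.2]{IRR_translators}, and the gradient bracket using Lemma \ref{lem:independence_from_underline_u}. A comparison between $|B_I|^2$ and $\omega_{1/6}$ then produces the term $-\beta\omega$.

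\emph{Step 1: checking (C1)--(C3) for $\sigma=\frac16$ and $n\geq5$.} Here $\mu=\frac{18}{5}$ and $\chi=\frac56\cdot\frac{5n-8}{n+2}$, which is increasing in $n$.
\begin{itemize}
\item[(C3)] For $n=5$ we get $\chi=\frac{85}{42}>2$. By monotonicity in $n$, (C3) holds for all $n\geq5$.
\item[(C2)] It reads $\tau\geq\frac32$, which is equivalent to $2(n-1)\geq n+2$, i.e. $n\geq4$.
\item[(C1)] For $n=5$ we have $\tau=\frac{12}{7}$ and $\hat\eta=\frac{2\tau}{\chi}=\frac{144}{85}$. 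Hence $2\hat\eta+\tau\approx5.1\geq\frac{23}{5}=\mu+1$. For general $n$, I would write $2\hat\eta+\tau=\tau(1+4/\chi)$ as an explicit rational function of $n$ and check the inequality directly. The limit value as $n\to\infty$ is $\frac{147}{25}$.
\end{itemize}
This elementary verification is the only slightly tedious point. I expect it to be routine, but it is where the dimensional restriction $n\geq5$ enters through (C3).

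\emph{Step 2: the reaction term.} With $\theta=1-\sigma=\frac56$, \cite[Lemma 7.2]{IRR_translators} gives $(1-\sigma)u\mathcal R_f-\mathcal R_I\geq\frac72|B_I|^4$ under the pinching \eqref{eq:pinching}. That lemma is purely pointwise and algebraic, so it transfers verbatim to the parabolic setting. Therefore
\[
-\frac{2}{|B_I|^2}\bigl[(1-\sigma)u\mathcal R_f-\mathcal R_I\bigr]\leq-7|B_I|^2 .
\]

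\emph{Step 3: the gradient term and conclusion.} By Lemma \ref{lem:independence_from_underline_u},
\[
-\frac{2}{|B_I|^2}\mathcal G_\zeta\leq 2\widetilde C|\zeta|^2=2\widetilde C\sigma^2|\nabla\log\omega_{1/6}|^2 .
\]
Dividing \eqref{eq:translator_approach} by $\sigma=\frac16$ yields
\[
(\partial_t-\Delta)\log\omega_{1/6}\leq-42|B_I|^2+2\widetilde C\sigma|\nabla\log\omega_{1/6}|^2 .
\]
Substituting the value of $\widetilde C$ gives
\[
2\widetilde C\sigma=\frac{1}{8\sigma(\tau-\hat\eta)}+1=1+\frac{3}{4(\tau-\hat\eta)}=\gamma .
\]
Finally, $\omega_{1/6}=fu^6=|B_I|^2u^5\leq\overline u^{\,5}|B_I|^2$ by \eqref{eq:bound_Q_A_I_u}. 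Hence $-42|B_I|^2\leq-42\,\overline u^{\,-5}\omega_{1/6}=-\beta\omega_{1/6}$, which proves the claim.
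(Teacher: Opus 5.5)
Your proposal is correct and follows the paper's proof step for step. It uses the reaction bound $\frac72|B_I|^4$ from \cite[Lemma 7.2]{IRR_translators}, Lemma \ref{lem:independence_from_underline_u} for the gradient bracket, the identity $2\widetilde C\sigma=\widetilde C/3=\gamma$, and $|B_I|^2=u^{-5}\omega_{1/6}\geq\overline u^{\,-5}\omega_{1/6}$. The only difference is that you verify (C1)--(C3) by hand, where the paper cites \cite[Remark A.4]{IRR_translators}. Your deferred general-$n$ check of (C1) does go through: clearing denominators turns $2\hat\eta+\tau\geq\frac{23}{5}$ into $32n^2-169n+344>0$, which has negative discriminant.
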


\begin{proof}
    For $\sigma=\frac{1}{6}$ and $n\geq 5$ conditions (C$1$)--(C$3$), the last of which is equivalent to $13n>64$, hold, as shown in \cite[Remark A.$4$]{IRR_translators}. So, Lemma \ref{lem:independence_from_underline_u} applies with $\widetilde C=\frac{9}{4(\tau-\hat\eta)}+3$. Inserting this bound and $(1-\sigma)u\mathcal R_f-\mathcal R_I\geq\frac{7}{2}|B_I|^4$, which is a consequence of \cite[Lemma $7.2$]{IRR_translators}, into \eqref{eq:translator_approach},
    \begin{equation*}
        \frac{1}{6}(\partial_t-\Delta)(\log\omega_{1/6})\leq -7|B_I|^2+2\widetilde C|\zeta|^2.
    \end{equation*}
    Recalling that $\zeta=\frac{1}{6}\nabla\log\omega_{1/6}$, this gives
    \begin{equation*}
        (\partial_t-\Delta)(\log\omega_{1/6})\leq -42|B_I|^2+\frac{\widetilde C}{3}|\nabla\log \omega_{1/6}|^2. 
    \end{equation*}
    Lastly, $|B_I|^2=u^{-5}\omega_{1/6}\geq \overline u^{\,-5}\omega_{1/6}$ by \eqref{eq:bound_Q_A_I_u}, and $\frac{\widetilde C}{3}=\gamma$. 
\end{proof}

\begin{rem}\label{rem:other_values_sigma_potential}
    The value $\sigma=\frac{1}{6}$ can actually be improved, as the proof of Lemma \ref{lem:result_via_Naff_inequalities} uses $\sigma=\frac{1}{6}$ only through (C$1$)--(C$3$) and the reaction estimate. For $\sigma\in(0,1)$, \cite[$(4.13)$]{Naff_Planarity} and \cite[$(6.3)$]{IRR_translators} give
    \begin{equation*}
        (1-\sigma)u\mathcal R_f-\mathcal R_I\ge\Big(\tfrac92-6\sigma\Big)|B_I|^4+2(1-2\sigma)|\mathring B_\nu|^2|B_I|^2+3(1-\sigma)f|B_I|^2.
    \end{equation*}
    So, if $\sigma\leq\frac{1}{2}$ and (C$1$)--(C$3$) hold, Lemma \ref{lem:result_via_Naff_inequalities} holds for $\omega_\sigma$ with the same proof, with $\beta$ and $\gamma$ depending on $\sigma$.
    Conditions (C$1$)--(C$3$) read as
    \begin{align*}
        \text{(C$1$)}&\Longleftrightarrow \frac{24(n-1)}{5n-8}\sigma+\frac{2(2n-5)}{n+2}\sigma(1-\sigma)\geq 1,
        \\\text{(C$2$)}&\Longleftrightarrow \sigma\geq \frac{n+2}{9n},
        \\\text{(C$3$)}&\Longleftrightarrow \sigma<\frac{3n-12}{5n-8}.
    \end{align*}
    The left-hand side in (C$1$) is increasing in $\sigma\in\left[0,\frac{1}{2}\right]$, and the inequality holds at $\sigma=\frac{1}{6}$, and hence for $\sigma\in\left[\frac{1}{6},\frac{1}{2}\right]$, by \cite[Remark A.$4$]{IRR_translators}. Moreover, since $\frac{3n-12}{5n-8}>\frac{1}{6}>\frac{n+2}{9n}$ for all $n\geq 5$, the three conditions (C$1$)--(C$3$), together with $\sigma\leq\frac{1}{2}$, are satisfied for every $\sigma\in\left[\frac{1}{6},\sigma_n\right)$, where
    \begin{equation*}
        \sigma_n\doteq\min\left\{\frac{3n-12}{5n-8},\frac{1}{2}\right\}.
    \end{equation*}
    Then, interpolating as in the proof below, Theorem \ref{thm:main_result_planarity_estimate} holds for $\sigma\in(0,\sigma_n)$. Notice that $\sigma_n=\frac{1}{2}$ for $n\geq 16$, while for $n=5$ the gain over $\frac{1}{6}$ is minimal, as $\sigma_5=\frac{3}{17}$. 
\end{rem}

\section{Proof of the interior planarity estimate}\label{sec:result_proofs}

\begin{proof}[Proof of Theorem \ref{thm:main_result_planarity_estimate}]
    By \eqref{eq:bound_Q_A_I_u}, $\omega_{1/6}\leq(3n)^{-6}\varepsilon_0^{-5}|\mathbf H|^2$ on $K_1\cap\{|\mathbf H|>0\}$. Hence $\omega_{1/6}$ extends by zero to a continuous function $\widetilde\omega_{1/6}$ on $K_1$. The extension is positive precisely on $K_1\cap\{|\mathbf H|>0\}\cap\{|B_I|>0\}$. On this set it is smooth, and it satisfies \eqref{eq:hypothesis_bound_evol_equation_omega} by Lemma \ref{lem:result_via_Naff_inequalities}. Thus Lemma \ref{lem:localization_lemma} applies to $\widetilde\omega_{1/6}$. Since $f\leq \frac{1}{3n}|\mathbf H|^2$, we deduce that, on $K_{1/2}\cap\{|\mathbf H|>0\}$,
\begin{equation*}
    |B_I|^2=\widetilde\omega_{1/6}^{\frac{1}{6}}f^{\frac{5}{6}}\leq \left(\frac{64(4n+9+16\gamma)}{27\beta}\right)^{\frac{1}{6}}\left(\frac{1}{3n}\right)^{\frac{5}{6}}R^{-\frac{1}{3}}|\mathbf H|^{\frac{5}{3}},
\end{equation*}
with $\beta$ and $\gamma$ as in Lemma \ref{lem:result_via_Naff_inequalities}. At points of $K_{1/2}$ where $|\mathbf H|=0$, the pinching gives $B=0$, so this inequality and the ones below hold trivially.
    To recover the full range $\sigma\in\left(0,\frac{1}{6}\right]$, we interpolate this inequality with $|B_I|^2\leq\frac{1}{3n}|\mathbf H|^2$ as follows:
    \begin{align*}
        |B_I|^2&=(|B_I|^2)^{1-6\sigma}(|B_I|^2)^{6\sigma}
        \\&\leq \left(\frac{1}{3n}|\mathbf H|^2\right)^{1-6\sigma}\left(\frac{64(4n+9+16\gamma)}{27\beta}\right)^\sigma\left(\frac{1}{3n}\right)^{5\sigma}R^{-2\sigma}|\mathbf H|^{10\sigma}
        \\&=C_1R^{-2\sigma}|\mathbf H|^{2-2\sigma},
    \end{align*}
    where
    \begin{equation*}
        C_1=C_1(n,\varepsilon_0,\sigma)\doteq \left(\frac{64(4n+9+16\gamma)}{27\beta}\right)^\sigma\left(\frac{1}{3n}\right)^{1-\sigma}>0.
    \end{equation*}
\end{proof}

\begin{rem}\label{rem:lynch_approach}
    For $\sigma\in\left(0,\frac{1}{30}\right]$, the results of Theorem \ref{thm:main_result_planarity_estimate} can also be derived from Lynch's refined estimates \cite{Lynch_PlanarityEstimate}, with neither Lemma \ref{lem:independence_from_underline_u} nor interpolation. Indeed, by Lemma $3.2$ and Proposition $5.1$ of \cite{Lynch_PlanarityEstimate} the reaction bracket in \eqref{eq:key_identity} is at least $\left(\frac{1}{2}-\sigma\right)|B_\nu|^2|B_I|^2$. Meanwhile, Proposition $5.2$ of the same paper, which relies on Lynch's sharp Kato-type inequality, bounds from below the gradient bracket with the nonnegative quantity $\frac{1}{29}|\nabla^\perp B_I|^2$. Hence, the right-hand side of \eqref{eq:key_identity} is at most $(2\sigma-1)|B_\nu|^2|B_I|^2$. This fact, together with $2(1-\sigma)\langle\zeta,\nabla\log f\rangle\le \frac{1-\sigma}{\sigma}|\zeta|^2+\sigma(1-\sigma)|\nabla\log f|^2$, turns the identity of Lemma \ref{lem:identity_log_omega} into an inequality,
    \begin{equation*}
        (\partial_t-\Delta)(\log\omega_\sigma)\leq-\frac{1-2\sigma}{\sigma}|B_\nu|^2+|\nabla\log\omega_\sigma|^2.
    \end{equation*}
    Moreover, $|B_\nu|^2\geq\frac1n|\mathbf H|^2\geq\frac{1}{n}\left(3n\varepsilon_0^{1-\sigma}\right)^{\frac{1}{\sigma}}\omega_\sigma$ by \eqref{eq:bound_Q_A_I_u}, so $\omega_\sigma$ satisfies \eqref{eq:hypothesis_bound_evol_equation_omega} with
    \begin{equation*}
        \beta=\frac{1-2\sigma}{n\sigma}\left(3n\varepsilon_0^{1-\sigma}\right)^{\frac1\sigma},\qquad\gamma=1.
    \end{equation*}
    Applying Lemma \ref{lem:localization_lemma} as above then gives the desired conclusion. The difference lies only in the gradient bracket, which Lynch's estimates make nonnegative, giving $\gamma=1$. On the other hand, Naff's estimates produce potentially negative gradient terms, controlled by Lemma \ref{lem:independence_from_underline_u}, at the cost of $\gamma>1$, which Lemma \ref{lem:localization_lemma} allows. The route through Naff’s estimates also gives a larger range of $\sigma$. 
\end{rem}

\begin{proof}[Proof of Corollary \ref{cor:lynch_planarity_results_sigma_extension}]
    To prove $(1)$, fix $(q_0,t_0)\in M\times(0,T)$ and let $R^2=t_0$. Then apply Theorem \ref{thm:main_result_planarity_estimate} with $x_0\doteq x(q_0,t_0)$ to the flow translated in time by $t_0$, i.e. $(q,t)\mapsto x(q,t+t_0)$.

    The proof of $(2)$ goes as follows. Since $M$ is closed, the flow is space-time proper. Moreover, the uniform pinching is preserved with the same $\varepsilon_0$ along the flow (see, e.g., \cite[Lemma $2.1$]{LynchNguyen_PinchedAncientSolutionsHighCodimMCF}), hence the result of the first part holds. Choose any $t_1\in(0,T)$, e.g. $t_1=T/2$. For $t\geq t_1$, the first part of the corollary implies that
    \begin{equation*}
        |B_I|^2\leq C_1t^{-\sigma}|\mathbf H|^{2-2\sigma}\leq C_1t_1^{-\sigma}|\mathbf H|^{2-2\sigma}. 
    \end{equation*}
    For $t\in[0,t_1]$, \eqref{eq:bound_Q_A_I_u} yields
    \begin{equation*}
        |B_I|^2\leq \frac{|\mathbf H|^{2\sigma}}{3n}|\mathbf H|^{2-2\sigma}\leq \frac{\Lambda^{2\sigma}}{3n}|\mathbf H|^{2-2\sigma},
    \end{equation*}
    where $\Lambda\doteq \max_{M\times[0,t_1]}|\mathbf H|$ is finite, since $M\times[0,t_1]$ is compact. Hence, on all $M\times[0,T)$, 
    \begin{equation*}
        |B_I|^2\leq C_2|\mathbf H|^{2-2\sigma},\qquad C_2\doteq \max \left\{C_1t_1^{-\sigma},\frac{\Lambda^{2\sigma}}{3n}\right\}.
    \end{equation*}
\end{proof}

\section{Codimension reduction for ancient solutions}\label{sec:ancient_sec_6}

The next lemma shows that, along space-time proper flows, containment in an affine subspace $y+\Pi$ propagates forward in time. The natural approach is to prove the vanishing of the squared distance $a=|\operatorname{proj}_{\Pi^\perp}(x-y)|^2$, which vanishes at the initial time and is a subsolution of the heat equation. Then, for closed flows, the maximum principle applied to $a$ gives the claim. In the noncompact case, we make use of the parabolic Omori--Yau maximum principle of \cite{Ma_ParabolicOmoriYau}. To pursue this route we have to consider the quotient $a/b$, where $b$ dominates $a$, as the function $a$ alone grows quadratically and \cite[Theorem $1.2$]{Ma_ParabolicOmoriYau} asks for sublinear growth. If, in addition, $b$ solves the heat equation, the quotient only produces gradient terms, which vanish along the parabolic Omori--Yau sequence. The idea for the right function $b$ comes from \cite[$(3.4)$]{Ecker_RegularityTheory}. One then multiplies the quotient by a negative exponential, as in \cite[Theorem $1.3$]{Ma_ParabolicOmoriYau}, to get the desired contradiction.  

\begin{lem}\label{lem:forward_confinement}
    Let $x:M^n\times[t_0,t_1]\to\mathbb R^{n+p}$, with $n\geq 2$, be a space-time proper mean curvature flow, $y\in\mathbb R^{n+p}$, and $\Pi$ a linear subspace. If $x(M,t_0)\subset y+\Pi$, then $x(M,t)\subset y+\Pi$ for every $t\in[t_0,t_1]$.
\end{lem}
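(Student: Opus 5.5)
Let $P$ denote the orthogonal projection onto $\Pi^\perp$, and set $a\doteq|P(x-y)|^2$. Since $x$ solves $(\partial_t-\Delta)x=0$ componentwise (because $\Delta x=\mathbf H$), we get
\[
(\partial_t-\Delta)a=-2\sum_i|P(e_i)|^2\leq 0,\qquad |\nabla a|^2\leq 4a .
\]
The goal is $a\equiv0$ on $M\times[t_0,t_1]$, given $a=0$ at $t=t_0$. As the preamble suggests, I will work with a quotient $a/b$. I take
\[
b\doteq |x-y|^2+2n(t-t_0)+1,
\]
which satisfies $(\partial_t-\Delta)b=0$ by \eqref{eq:evolution_equations_r}. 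Moreover $b\geq1$, $b\geq a$, and $|\nabla b|^2\leq 4|x-y|^2\leq 4b$. Then $v\doteq a/b\in[0,1]$ is bounded. A direct computation gives
\[
(\partial_t-\Delta)v=\frac{(\partial_t-\Delta)a}{b}+\frac{2}{b}\langle\nabla v,\nabla b\rangle\leq\frac{2}{b}\langle\nabla v,\nabla b\rangle ,
\]
and the coefficient satisfies $|\nabla b|/b\leq 2/\sqrt b\leq2$.

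Next I set $\rho\doteq e^{-(t-t_0)}v$, so that
\[
(\partial_t-\Delta)\rho\leq-\rho+\frac{2}{b}\langle\nabla\rho,\nabla b\rangle .
\]
Suppose for contradiction that $\sup\rho=\delta>0$ on $M\times[t_0,t_1]$. Since $\rho$ is bounded, the flow is space-time proper (so each slice is complete), and $x$ provides the exhaustion needed, I apply the parabolic Omori--Yau principle \cite[Theorem $1.2$]{Ma_ParabolicOmoriYau} to $\rho$. This yields a sequence $(q_k,t_k)$ along which
\[
\rho\to\delta,\qquad |\nabla\rho|\to0,\qquad \liminf\,(\partial_t-\Delta)\rho\geq0 .
\]
Because $\rho=0$ at $t_0$ and $\rho$ is continuous, $t_k$ stays away from $t_0$ in the appropriate sense, or the theorem accommodates this through its one-sided time derivative. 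Along the sequence, the inequality above gives
\[
0\leq\liminf(\partial_t-\Delta)\rho\leq-\delta+2\cdot\lim|\nabla\rho|=-\delta<0,
\]
which is a contradiction. Hence $\rho\equiv0$, so $a\equiv0$, which is the claim.

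The main obstacle is checking the hypotheses of Ma's theorem: the growth condition, which the quotient makes trivial since $\rho$ is bounded (hence sublinear); the required completeness or properness; and the handling of the initial-time boundary. If the sup is approached near $t=t_0$, continuity and $\rho(\cdot,t_0)=0$ do not immediately exclude this uniformly in a noncompact setting. I would address this either by using the version of the theorem on $[t_0,t_1]$ that requires $\sup\rho>\sup\rho(\cdot,t_0)$, or by first localizing with the properness of the flow.
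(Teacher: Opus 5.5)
Your proposal is correct and matches the paper's proof essentially step for step: the same $a=|\operatorname{proj}_{\Pi^\perp}(x-y)|^2$, the same caloric dominating function $b=1+|x-y|^2+2n(t-t_0)$, the bounded quotient $a/b$, the weight $e^{-(t-t_0)}$, and Ma's parabolic Omori--Yau principle (Theorem 1.2), whose hypothesis $\sup\rho>0=\sup\rho(\cdot,t_0)$ is exactly how the paper handles the initial-time issue you raise. (Minor slip: the gradient coefficient is bounded by $4$, not $2$, since $2|\nabla b|/b\le 4/\sqrt b\le 4$; this is immaterial because $|\nabla\rho|\to0$ along the sequence.)
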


\begin{proof}
    To prove that $x(M,t)\subseteq y+\Pi$ for every $t\in[t_0,t_1]$, let 
    \begin{equation*}
        a\doteq |\operatorname{proj}_{\Pi^\perp}(x-y)|^2,\qquad b\doteq 1+|x-y|^2+2n(t-t_0).
    \end{equation*}
    They satisfy $(\partial_t-\Delta)a\leq 0$ and $(\partial_t-\Delta)b=0$; compare \cite[pp. $26$ and $28$]{Ecker_RegularityTheory}. If $M$ is compact the maximum principle trivially implies $a\equiv 0$, and hence the claim. Therefore, from now on, we assume that $M$ is noncompact. Since $|\nabla b|\leq 2\sqrt b$ and $0\leq a< b$, if $\Theta\doteq a/b$,
    \begin{equation*}
        (\partial_t-\Delta)\Theta=\frac{(\partial_t-\Delta)a}{b}-\frac{a(\partial_t-\Delta)b}{b^2}+\frac{2}{b}\langle \nabla b,\nabla \Theta\rangle\leq \frac{4}{\sqrt b}|\nabla \Theta|\leq 4|\nabla \Theta|.
    \end{equation*}
    Set $\rho\doteq e^{-(t-t_0)}\Theta$. Then,
    \begin{equation}\label{eq:ineq_rho_forward_lemma}
        (\partial_t-\Delta)\rho=-\rho+e^{-(t-t_0)}(\partial_t-\Delta)\Theta\leq -\rho +4|\nabla\rho|.
    \end{equation}
    Moreover, $\rho$ is bounded, since $\Theta\in[0,1)$, and $\rho(\cdot,t_0)=0$ as $a(\cdot,t_0)=0$. Suppose $\sup\rho>0$, so that \cite[Theorem $1.2$]{Ma_ParabolicOmoriYau} applies. As a consequence, there exists a sequence of points $(q_i,t_i)$ such that
    \begin{equation*}
        \rho(q_i,t_i)\to \sup\rho,\quad |\nabla\rho|(q_i,t_i)\to 0,\quad \liminf_i(\partial_t-\Delta)\rho(q_i,t_i)\geq 0.
    \end{equation*}
    In particular, this contradicts \eqref{eq:ineq_rho_forward_lemma}, as \eqref{eq:ineq_rho_forward_lemma} evaluated along $(q_i,t_i)$ yields
    \begin{equation*}
        \limsup_i(\partial_t-\Delta)\rho(q_i,t_i)\leq -\sup\rho<0.
    \end{equation*}
    Thus $\rho\equiv 0$, which implies $a\equiv 0$, and hence $x(M,t)\subseteq y+\Pi$. 
\end{proof}

\begin{lem}\label{lem:lem_H_vanishing_or_not}
    Let $n\geq 2$, and let $x:M^n\times(-\infty,T)\to\mathbb R^{n+p}$ be a space-time proper MCF of a connected manifold $M$ without boundary that satisfies $|B|^2\leq\left(\frac{4}{3n}-\varepsilon_0\right)|\mathbf H|^2$ for some $\varepsilon_0>0$. Then, either $x(\cdot,t)$ is a static isometry onto a fixed affine $n$-plane, or $f>0$, and hence $|\mathbf H|>0$, on $M\times(-\infty,T)$. 
\end{lem}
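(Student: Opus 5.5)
The plan is to apply the strong maximum principle to $f$, using Lemma \ref{lem:inequality_evol_equation_f}, which gives $(\partial_t-\Delta)f\geq 0$ wherever the pinching $|B|^2\leq\frac{4}{3n}|\mathbf H|^2$ holds. Since $f\geq\varepsilon_0|\mathbf H|^2\geq 0$, either $f>0$ everywhere or $f$ vanishes at some point $(q_0,t_0)$. In the latter case $f$ is a nonnegative supersolution of the heat equation attaining its minimum $0$ at an interior time. The parabolic strong maximum principle, which is local and needs no compactness, then gives $f\equiv 0$ on $M\times(-\infty,t_0]$, because $M$ is connected.

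Next I identify the geometry on $M\times(-\infty,t_0]$. From $f\geq\varepsilon_0|\mathbf H|^2$ we get $\mathbf H\equiv 0$, and the pinching then forces $B\equiv 0$. Hence $\partial_t x=\mathbf H=0$, so the flow is static on $(-\infty,t_0]$. Moreover, each time slice is totally geodesic, connected, and complete (space-time properness makes the slices complete, in fact properly immersed). Therefore $x(\cdot,t)$ is an isometric immersion onto an open subset of a fixed affine $n$-plane $y+\Pi$. By completeness it is a covering map, hence an isometry onto $y+\Pi$, since the plane is simply connected.

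It remains to extend this past $t_0$ up to $T$. Lemma \ref{lem:forward_confinement}, applied on $[t_0,t_1]$ for each $t_1<T$, shows $x(M,t)\subset y+\Pi$ for all $t\geq t_0$. An $n$-dimensional immersed submanifold contained in an $n$-plane is an open piece of it, hence totally geodesic, so $B\equiv 0$ and $\mathbf H\equiv 0$ for all times. The flow is therefore static for all $t$, and by the same completeness argument $x(\cdot,t)$ is an isometry onto $y+\Pi$.

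Otherwise $f>0$ everywhere, and $|\mathbf H|>0$ follows from $f\leq\frac{1}{3n}|\mathbf H|^2$ in \eqref{eq:bound_Q_A_I_u}.

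The main obstacle is making the strong maximum principle step rigorous. The reaction and gradient terms in Lemma \ref{lem:inequality_evol_equation_f} are nonnegative, so $f$ is a genuine supersolution of a linear heat equation with smooth coefficients. The standard local strong maximum principle then propagates the zero set backward in time along connected $M$, applied on compact subsets and chained via connectedness. The covering/isometry claim is routine by comparison.
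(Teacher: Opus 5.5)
Your proposal is correct and follows the paper's proof step by step. The strong maximum principle for the nonnegative supersolution $f$ gives $f\equiv 0$, hence $\mathbf H\equiv 0$ and $B\equiv 0$, on $M\times(-\infty,t_0]$; Lemma \ref{lem:forward_confinement} then propagates containment in the plane forward in time, and completeness from properness yields the isometry. The only difference is cosmetic: you justify the isometry with the standard covering-map argument for local isometries from a complete manifold onto the simply connected plane, where the paper instead cites O'Neill's theorem.
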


\begin{proof}
    Since $(\partial_t-\Delta)f\geq 0$, by Lemma \ref{lem:inequality_evol_equation_f}, and $f\geq 0$, if $f(q_0,t_0)=0$ for some $(q_0,t_0)\in M\times(-\infty,T)$ the strong maximum principle implies that $f\equiv 0$ on $M\times(-\infty,t_0]$. Hence $\mathbf H\equiv 0$ and, by the pinching, $B\equiv 0$ there, so $M_{t_0}$ is contained in an affine $n$-plane $y+\Pi$. Then, Lemma \ref{lem:forward_confinement} implies that $M_t\subseteq y+\Pi$ for every $t\geq t_0$ and $B\equiv 0$ on $M\times(-\infty,T)$. Completeness from properness makes $x(\cdot,t)$ an isometry onto $y+\Pi$, thanks to \cite[Theorem $1$]{ONeill_IsometricImmersion}. 

    Otherwise $f>0$, and hence $|\mathbf H|>0$, everywhere on $M\times(-\infty,T)$. 
\end{proof}

\begin{proof}[Proof of Theorem \ref{thm:codim_reduction_ancient}]
    First, consider the case $n\geq 5$. It is not restrictive to assume that $0<\varepsilon_0<\frac{1}{3n}$, since the pinching with $\varepsilon_0$ implies the pinching with any smaller constant. By Lemma \ref{lem:lem_H_vanishing_or_not}, either $\mathbf H\equiv 0$ and $x(\cdot,t)$ is a static isometry onto a fixed affine $n$-plane, or $|\mathbf H|>0$ on $M\times(-\infty,T)$, which is the condition we assume from now on.

Fix $(q_0,t_0)\in M\times(-\infty,T)$ and $R>0$. The flow $(q,t)\mapsto x(q,t+t_0)$, with $t\in[-R^2,0]$, satisfies the hypothesis of Theorem \ref{thm:main_result_planarity_estimate} with $x_0\doteq x(q_0,t_0)$, which gives $|B_I|^2(q_0,t_0)\leq C_1R^{-2\sigma}|\mathbf H|^{2-2\sigma}(q_0,t_0)$. Letting $R\to \infty$ we get $|B_I|^2(q_0,t_0)=0$. Since the point $(q_0,t_0)$ is arbitrary, $B_I\equiv 0$. Then \cite[Proposition 2.5]{Naff_Planarity} gives codimension reduction, as every eigenvalue $\lambda_i$ of $h^{n+1}$ satisfies $\lambda_i^2\leq |B|^2<\frac{4}{3n}|\mathbf H|^2<|\mathbf H|^2$. Specifically, each time slice $M_t$ is contained in an $(n+1)$-dimensional affine subspace $A_t$. Actually $A_t$ is time independent. Indeed, given $s<t$, Lemma \ref{lem:forward_confinement} applied to $y+\Pi=A_s$ says that $M_t\subseteq A_s$. Let $\tilde A_t\subseteq A_t$ be the affine hull of $M_t$. If $\dim \tilde A_t=n$ then $B\equiv 0$ and $\mathbf H\equiv 0$, which contradicts the assumption $|\mathbf H|>0$. Thus $\dim\tilde A_t=n+1$ and $\tilde A_t=A_t$ by dimension. As a consequence $A_t\subseteq A_s$, which are then equal by dimension. Hence $A_t=A_s$ for every time $t$. 

Now consider the case $n\in\{2,3,4\}$. The properness assumption guarantees that each $M_t$ is complete. By \cite[Proposition $8.1$]{IRR_translators}, each $M_t$ is either flat or compact. The first case is trivial. In the latter, $|\mathbf H|>0$ by Lemma \ref{lem:lem_H_vanishing_or_not}, and \cite[Theorem 1.1]{LynchNguyen_PinchedAncientSolutionsHighCodimMCF} shows that the flow is a family of shrinking spheres. In particular, it lies in a fixed affine $(n+1)$-dimensional subspace.
\end{proof}

\bibliographystyle{amsalpha}
\bibliography{bibliography}

\end{document}